\tikzset{>= angle 60}
\newtheorem{theorem}{Theorem}
\newtheorem{definition}{Definition}
\newtheorem{corollary}[theorem]{Corollary}
\newtheorem{lemma}[theorem]{Lemma}
\newtheorem*{conjecture}{Conjecture}
\newcommand{\cqd}{\hfill $\square$}
\def\R{{\mathbb R}}
\def\Z{{\mathbb Z}}
\def\E{{\mathcal E}}
\def\o{\mathrm o}
\begin{document}

\title{Rotation number of interval contracted rotations}

\maketitle

\centerline{ Michel Laurent and  Arnaldo Nogueira}

\footnote{\footnote \rm 2010 {\it Mathematics Subject Classification:}   
11J91,   37E05. }


\marginsize{2.5cm}{2.5cm}{1cm}{2cm}
  \begin{abstract} Let $0<\lambda<1$. We consider the one-parameter family of circle $\lambda$-affine contractions $f_\delta:x \in [0,1) \mapsto \lambda x + \delta \; {\rm mod}\,1 $, where $0 \le \delta <1$. Let $\rho$ be the rotation number of the map $f_\delta$. We will  give some  numerical relations between the values of  $\lambda,\delta$ and $\rho$, essentially using  Hecke-Mahler series and a tree structure.  When both parameters $\lambda$ and $\delta$ are algebraic  numbers, we show that  $\rho$ is a rational number.  Moreover, in the case  $\lambda$ and $\delta$ are rational, we give an explicit upper bound for  the  height  of $\rho$ under some assumptions on $\lambda$.
  \end{abstract}

\section{Introduction}\label{sec:intro}

Let $I=[0,1)$ be the unit interval.  

\begin{definition} 
Let $0< \lambda<1$ and $\delta\in I$. We call the map defined by
$$
f=f_{\lambda,\delta}:x\in I \mapsto \{ \lambda x+\delta\}  ,
$$
where the symbol $\{ .\}$ stands for the fractional part, a
{\it  contracted rotation} of $I$. 
\end{definition}

In particular, if $\lambda+\delta>1$, $f$ is a {\it 2-interval piecewise contraction} on the interval $I$ (see Figure 1).

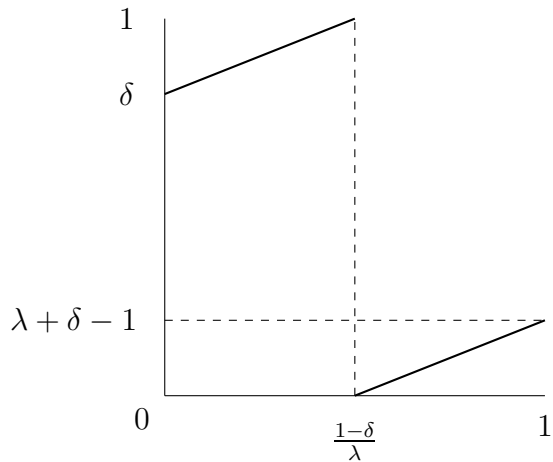
\begin{figure}[ht]
\begin{tikzpicture}
\draw (0,0) -- (5,0);
\draw (0,0) -- (0,5);
\draw[thick] (0,4) -- (2.5,5);
\draw[thick] (2.5,0) -- (5,1);
\node   at (-0.3,-0.3) {$0$};
\node at (2.5,-0.6) {$\frac{1-\delta}{\lambda}$};
\node   at (-0.5,5) {$1$};
\node   at (-0.5,4) {$\delta$};
\draw[dashed] (0,1) -- (5,1);
\node   at (-1.2,1) {$\lambda + \delta -1$};
\node   at (5,-0.4) {$1$};
\draw[dashed] (2.5,0) -- (2.5,5);
\end{tikzpicture}
\caption{ A plot of $f_{\lambda, \delta}: I \to I$, where $\lambda + \delta > 1$}
\end{figure}

 Many authors have studied the dynamics of contracted rotations, as a dynamical system or in applications, amongst others \cite{Br, Bu, BuC, Co, DH, FC, LM, NS}. 
 It is known that every  contracted rotation map $f$ has a rotation number  $\rho = \rho_{\lambda,\delta}$, satisfying $0\le \rho <1$. The classical definition of $\rho$  will be  recalled in Section 5.
   If $ \rho $ takes an irrational value,  then the closure $\overline{C}$ of  the limit set  $C:= \cap_{k\ge 1}f^k(I)$ of $f$  is a Cantor set  and $f$ is topologically conjugated to the rotation map $x \in I \mapsto x + \rho   \mod 1$ on C. When the  rotation number is rational,  the map $f$ has  at most one periodic orbit (see \cite{GT} or Section 5.2 for a simple proof)  and the limit set $C$ equals the periodic orbit if it does exist.

 The goal of this article is to study the value of  the rotation number  $\rho_{\lambda,\delta}$
according to the diophantine nature of the parameters  $\lambda$ and $\delta$. 
Applying a classical transcendence result,
which is stated as  Theorem 5 below,   we   obtain the

\begin{theorem} 
Let $0<\lambda, \delta<1$ be algebraic real numbers. Then,  the rotation number $\rho_{\lambda,\delta}$ is a rational number.
\end{theorem}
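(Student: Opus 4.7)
I argue by contradiction, assuming $\rho:=\rho_{\lambda,\delta}$ is irrational. The starting point is the elementary iteration identity, proved by induction on $n\ge 1$:
\[
f^n(x)=\lambda^n x+\frac{\delta(1-\lambda^n)}{1-\lambda}-\sum_{k=1}^n \epsilon_k(x)\lambda^{n-k},
\qquad \epsilon_k(x):=\lfloor\lambda f^{k-1}(x)+\delta\rfloor\in\{0,1\}.
\]
By hypothesis, $f|_{\overline C}$ is conjugate, through some homeomorphism $\phi:\overline C\to\R/\Z$, to the irrational rotation $R_\rho$, and hence is a bijection. In particular, every $x^{*}\in\overline{C}$ admits a unique backward orbit $(x^{*}_{-N})_{N\ge 0}$ inside $\overline{C}$.

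I pick $x^{*}$ to be an algebraic point of $\overline{C}$, for instance an endpoint of a gap of $\overline C$: such endpoints lie in $\Q(\lambda,\delta)$ because they arise by iterating the two affine branches of $f$ (both with rational coefficients in $\lambda,\delta$) starting from the discontinuity $c=(1-\delta)/\lambda$. Applying the identity with $x=x^{*}_{-N}$, exploiting $f^N(x^{*}_{-N})=x^{*}$, letting $N\to\infty$ so that $\lambda^N x^{*}_{-N}\to 0$, and reindexing $j=N-k$, the sum $\sum_{k=1}^N\epsilon_k(x^{*}_{-N})\lambda^{N-k}$ stabilises (its general term $\eta_j:=\lfloor\lambda x^{*}_{-j-1}+\delta\rfloor$ depends on $j$ alone), and one obtains
\[
\frac{\delta}{1-\lambda}-x^{*}\;=\;\sum_{j=0}^\infty \eta_j\,\lambda^j.
\]
Via $\phi$, $\eta_j$ is the indicator that the point $\phi(x^{*})-(j+1)\rho\in\R/\Z$ falls inside the arc $\phi(\overline C\cap[c,1))$. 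By the very definition of the rotation number, this arc has length $\rho$, so $(\eta_j)$ is a mechanical sequence of slope $\rho$. An Abel summation then rewrites the right-hand side, up to algebraic factors in $\Q(\lambda)$, as the Hecke-Mahler series $\sum_{m\ge 1}\lfloor m\rho+\beta\rfloor\lambda^m$ for some real phase $\beta$ coming from $\phi(x^{*})$.

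Theorem 5 asserts the transcendence of this Hecke-Mahler series at the algebraic point $\lambda\in(0,1)$ when $\rho$ is irrational, directly contradicting the algebraicity of the left-hand side. Hence $\rho\in\Q$. The main obstacle I anticipate is the arithmetic status of the phase $\beta$, which is not a priori algebraic since it is read off the implicit conjugacy $\phi$: if Theorem 5 already tolerates an arbitrary real $\beta$ the argument is complete, otherwise I would eliminate $\beta$ by applying the identity above at two algebraic reference points of $\overline C$ and subtracting, or by normalising $\phi$ so that a distinguished gap endpoint is sent to $0\in\R/\Z$. The remaining steps --- checking the iteration identity, the algebraicity of the gap endpoints of $\overline C$, and the Abel-summation rewriting --- are routine.
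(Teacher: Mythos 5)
Your plan and the paper both hinge on the same transcendence input, namely Theorem~5 on the Hecke--Mahler series, and both aim to express an algebraic quantity as a Sturmian-coefficient power series in $\lambda$. But the routes to that series are genuinely different. The paper simply quotes the explicit formula of Theorem~3(iii),
\[
\delta(\lambda,\rho)=(1-\lambda)\Bigl(1+\sum_{k\ge 1}\bigl([(k+1)\rho]-[k\rho]\bigr)\lambda^k\Bigr),
\]
then uses the telescoping identity~(2) to rewrite this as $\frac{(1-\lambda)^2}{\lambda}\sum_{k\ge1}([k\rho]+1)\lambda^k$, which is \emph{exactly} the form $\sum p([k\rho])\lambda^k$ treated by Theorem~5 (with a degree-one $p$). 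You, instead, rederive a comparable series dynamically, by iterating the two affine branches along a backward orbit landing at a gap endpoint $x^{*}\in\Q(\lambda,\delta)$. Your iteration identity and the limiting computation $\frac{\delta}{1-\lambda}-x^{*}=\sum_{j\ge0}\eta_j\lambda^j$ are correct, and the identification of $(\eta_j)$ as a mechanical sequence of slope $\rho$ via the conjugacy $\phi$ is in the right spirit.

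The genuine gap is exactly the one you flag: the phase $\beta=\phi(x^{*})$ of your Sturmian sequence is not under arithmetic control, and Theorem~5 as stated has \emph{no} free phase. It concerns only $\sum p([k\rho])\lambda^k$, i.e.\ the phase-zero normalisation, which is what the paper automatically gets from formula~(1). Your proposed repairs are not yet proofs. Subtracting the identities at two algebraic reference points produces a difference of two Sturmian sequences of the same slope but different phases; the result is a $\{-1,0,1\}$-valued sequence that is not of the form $(p([k\rho]))_k$, so Theorem~5 does not apply to it directly. The normalisation route is the viable one, but it requires actual work: you must fix the free rotation in $\phi$ so that the distinguished gap endpoint maps to a point making the arc $\phi(\overline{C}\cap[c,1))$ start at $0$ (or at $1-\rho$), and then verify that $\eta_j$ becomes precisely $[(j+2)\rho]-[(j+1)\rho]$ (or a shift thereof), at which point identity~(2) of the paper finishes the job. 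In other words, carrying out your normalisation carefully amounts to re-proving the relevant case of Theorem~3(iii). The paper avoids all of this by citing that formula outright; if you intend your route to be self-contained, the normalisation step must be made explicit rather than deferred.
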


In view of Theorem 1, a natural problem that arises  is to  estimate  the height of the rational  rotation number $\rho_{\lambda,\delta}$ in terms of the 
algebraic  numbers $ \lambda $ and $ \delta $.
We  provide a partial solution for this  issue  when $\lambda$ and $\delta$ are rational. Note that Theorem 2  includes the case where  $\lambda$ is the reciprocal of an integer.

\begin{theorem}
 Let $\lambda = a/b$  and  $\delta = r/s$ be rational numbers 
with $0< \lambda, \delta <1$.  Assume that $b> a^\gamma$, where $\gamma = {1+ \sqrt{5}\over 2}$ denotes the golden ratio. Then, the rotation number $\rho_{\lambda,\delta}$ is a rational number $p/q$ where 
$$
0 \le p < q \le  \gamma^{ 2+ { \gamma \log (sb) \over  \log b - \gamma \log a} }.
$$
\end{theorem}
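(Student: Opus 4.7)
The plan is to combine the algebraic equation satisfied by the periodic orbit with a renormalisation/tree argument driven by the Stern--Brocot tree of rotation numbers.

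With $\rho=p/q$ in lowest terms, the earlier discussion produces a unique periodic orbit $x_0,\ldots,x_{q-1}$ whose itinerary $(\epsilon_0,\ldots,\epsilon_{q-1})\in\{0,1\}^q$ satisfies $\sum_i\epsilon_i=p$. Iterating $x_{i+1}=\lambda x_i+\delta-\epsilon_i$ and using $x_q=x_0$ yields
\[
(1-\lambda^q)\,x_0 \;=\; \delta\,\frac{1-\lambda^q}{1-\lambda}\;-\;\sum_{i=0}^{q-1}\epsilon_i\lambda^{q-1-i}.
\]
Setting $\lambda=a/b$ and $\delta=r/s$ and clearing denominators, this becomes an integer identity from which I would extract a divisibility relation of the form $M \mid s\sum_i\epsilon_i a^{q-1-i}b^i$, with $M=(b^q-a^q)/(b-a)$. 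Moreover, a direct analysis of the orbit shows that its gaps between consecutive points form the multiset $\{G,\lambda G,\ldots,\lambda^{q-1}G\}$ with $G=(1-\lambda)/(1-\lambda^q)$, and that the set of admissible $\delta$'s for a given $p/q$ is a plateau $[\delta^-_{p/q},\delta^+_{p/q}]$ of width $\lambda^{q-1}(1-\lambda)^2/(1-\lambda^q)$ whose endpoints are explicit rational functions of $\lambda$.

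Next I would exploit the Stern--Brocot tree of rotation numbers. Every $p/q\in(0,1)$ with $\gcd(p,q)=1$ sits at a depth $n=a_1+\cdots+a_k$ in this tree, where $p/q=[0;a_1,\ldots,a_k]$ is its continued-fraction expansion; the Fibonacci case $[0;1,\ldots,1]$ is the worst one, giving $q\le\gamma^{n+2}$. A renormalisation $R\colon(\lambda,\delta)\mapsto(\lambda',\delta')$ should be available in which one step down the tree corresponds to passing to the first-return map of $f_{\lambda,\delta}$ onto an appropriate sub-interval, with new contraction rate $\lambda'=\lambda^{q_j}$ (a power dictated by the convergent denominators of $\rho$) and a new translation $\delta'$ which is a rational combination of $\lambda$ and $\delta$. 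Iterating $R$ descends the tree until the rotation number becomes trivial.

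The crucial step is then to track the height of $\delta_k=r_k/s_k$ across these renormalisations. Under the hypothesis $b>a^\gamma$, the geometric contraction by $\lambda^{q_j}\le(a/b)^{q_j}$ dominates the Fibonacci-driven blow-up: a careful bookkeeping should show that the effective per-step growth factor is $b/a^\gamma$, forcing the procedure to terminate at depth $n\le \gamma\log(sb)/(\log b-\gamma\log a)$. Combined with $q\le\gamma^{n+2}$ this gives the stated inequality. The main obstacle is exactly this quantitative step: making the renormalisation precise enough to match the $\gamma$-exponent (rather than a larger constant), and verifying that the effective height of $\delta_k$ is really bounded by $sb\cdot(b/a^\gamma)^k$. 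The golden ratio arises because the Fibonacci continued fraction $[0;1,1,\ldots]$ is the extremal case both for Stern--Brocot denominators and for the recursion on heights, which is precisely where the threshold $b>a^\gamma$ becomes critical.
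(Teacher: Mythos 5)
Your high-level intuition is right (the Stern--Brocot tree, the Fibonacci worst case $q\le\gamma^{n+2}$, the criticality of $b>a^\gamma$), and you correctly identify the plateau widths and the orbit-gap structure, but the proposal is not a proof: the central quantitative step is left open, and you say so yourself (``a careful bookkeeping \emph{should} show\ldots The main obstacle is exactly this quantitative step''). The missing idea is also not the one the paper uses, so it is worth being explicit about the difference.

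The paper never renormalises the dynamics. Instead it works entirely with the static tree of plateau endpoints
$\frac{P}{Q}=\frac{1-\lambda}{1-\lambda^q}\,c\!\left(\lambda,\frac{p}{q}\right)$
and proves a Liouville-type estimate (Lemma~8): if $\delta=r/s$ lies strictly in the \emph{gap} between two adjacent plateaus at a given row, with Stern--Brocot labels $p/q<p'/q'$, then
$b^{\max(q,q')}\le s\,b\,a^{q+q'}$.
This comes from clearing denominators: $b^{q'-1}(sP'-rQ')$ is a positive integer, hence $\ge 1$, while the total gap length is $\lambda^{q+q'-1}/(QQ')$ by the mediant identity~(6). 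Choosing the level $k=\left[\gamma\log(sb)/(\log b-\gamma\log a)\right]$ and applying Lemma~8 at rows $k$ and $k+1$ (after the mediant is inserted), one gets two inequalities $b^{\max(q,q')}\le sba^{q+q'}$ and $b^{q+q'}\le sba^{2q+q'}$ (or its $q\leftrightarrow q'$ mirror); combining them gives $b^{1/\gamma}\le (sb)^{1/(q+q')}a$ because $\max\!\big(\tfrac{q}{q+q'},\tfrac{q+q'}{2q+q'}\big)\ge\tfrac{1}{\gamma}$, and since $q+q'\ge k+1$ this contradicts the choice of $k$. Hence $\delta$ is trapped in a plateau of depth $\le k+1$, and $q\le F_{k+2}\le\gamma^{2+\gamma\log(sb)/(\log b-\gamma\log a)}$.

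Your renormalisation route---passing to first-return maps with $\lambda'=\lambda^{q_j}$ and tracking the height of $\delta_k$---is a genuinely different strategy, and plausibly could be made to work since the renormalisation of contracted rotations is well understood. But as written it has a real gap: you assert that the height of $\delta_k$ grows like $sb\cdot(b/a^\gamma)^k$ and that this forces termination, without deriving the recursion for the numerator/denominator of $\delta_k$ under $R$, and without explaining how an \emph{irrational} rotation number is ruled out (you would need the height to stay bounded along the infinite descent, i.e.\ a uniform integrality obstruction, which is precisely what Lemma~8 supplies in the paper). Until that recursion is written down and the termination argument made, the golden-ratio threshold is only conjectural in your version. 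I would suggest either filling in the renormalisation bookkeeping, or switching to the paper's simpler route: a single arithmetic Lemma on the gap widths, applied at two consecutive tree levels, with the $\gamma$ emerging from $\max\!\big(\tfrac{q}{q+q'},\tfrac{q+q'}{2q+q'}\big)\ge 1/\gamma$.
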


 It should be interesting to extend  the validity of Theorem 2 to a larger class of ratios $a/b$. 
The exponent $\gamma = {1+\sqrt{5}\over 2}$ is best possible with regard to the tools employed in its  proof, as explained  in  Section 3.4  below. However, Theorem 2 should presumably be improved. We propose the following 

\begin{conjecture}
Let $\gamma $ be any real number greater than 1.  Theorem 2 holds true assuming that $b > a^\gamma$, with a possible larger upper bound for $q$ depending only on $\gamma, a,b$ and $s$.
\end{conjecture}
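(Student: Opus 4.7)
The plan is to attack this conjecture by revisiting the quantitative ingredients of the proof of Theorem 2, since rationality of $\rho_{\lambda,\delta}$ in the range $b > a^\gamma$, $\gamma > 1$, is already guaranteed by Theorem 1; the remaining issue is to extract an explicit upper bound for the denominator $q$. As the authors indicate, the exponent $\gamma_0 = (1+\sqrt{5})/2$ is optimal with respect to the Hecke-Mahler machinery they employ, so any improvement must either refine the Diophantine input or replace the combinatorial encoding used in Section 3. I would first pinpoint precisely where the golden ratio enters: presumably through a Fibonacci-type recurrence on the heights of the partial convergents produced by the tree structure, reflecting a worst-case doubling of the complexity of the word encoding the itinerary of a generic point. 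The aim of the first step is to decouple this combinatorial recursion from the underlying arithmetic of $(\lambda,\delta)$.

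A first concrete strategy is a bootstrap. Taking Theorem 2 as the base case with $\gamma = \gamma_0$, one obtains, for any rational pair $(\lambda,\delta)$ with $b > a^{\gamma_0}$, an a priori bound on $q$ in terms of $a,b,s$. Using this coarse control, one could analyse the orbit of $f_{\lambda,\delta}$ over a controlled number of steps, extract refined arithmetic information about $\lambda$, $\delta$ and their interaction, and feed it back into the transcendence argument to yield a smaller admissible exponent. Iterating the scheme, with exponents $\gamma_n$ decreasing to $1$, would prove the conjecture provided the loss at each step can be bounded uniformly in $n$. A complementary strategy is to replace the qualitative Theorem 5 by a quantitative version of the Subspace Theorem in the style of Adamczewski and Bugeaud, applied to the two-variable Mahler-type functional equation satisfied by the generating series of the itinerary of $0$ under $f_{\lambda,\delta}$; a quantitative subspace argument should produce an explicit bound for $q$ depending only on $\gamma$, $a$, $b$ and $s$, valid as soon as $b > a^\gamma$.

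The main obstacle, I expect, will be controlling $q$ in the critical regime where $b / a^\gamma$ is close to $1$. There the tree of combinatorial types becomes extremely unbalanced, the natural Hecke-Mahler convergents lose approximation quality at a rate that is not polynomial in $\log b$, and standard applications of the Subspace Theorem produce ineffective dependence on $\lambda$ and $\delta$. Overcoming this seems to require either a two-dimensional continued fraction algorithm adapted to the dynamics of $f_{\lambda,\delta}$, capable of producing improved simultaneous approximations to the pair $(\lambda,\delta)$, or the exploitation of cancellations between successive convergents that are specific to rational parameters. Either route will likely demand substantial new input beyond the framework of the present paper, which is consistent with the status of the statement as a conjecture rather than a theorem.
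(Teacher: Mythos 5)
This statement is labelled a \emph{conjecture} in the paper, and the paper supplies no proof of it; there is therefore nothing to compare your attempt against. What you have written is not a proof either, and you acknowledge this honestly in your final sentence. It is a research programme, and as such it contains a genuine gap: neither of your two strategies, as sketched, establishes the conclusion or even reduces it to a clearly stated sub-problem.

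Concretely, the bootstrap strategy fails at the base case. The conjecture concerns pairs $(\lambda,\delta)$ with $a^{\gamma} < b$ for some $\gamma$ strictly between $1$ and $\gamma_0 = (1+\sqrt 5)/2$, and in the critical regime $a^{\gamma} < b \le a^{\gamma_0}$ the hypothesis of Theorem 2 is simply not met, so there is no ``a priori bound on $q$'' to bootstrap from. Your proposed iteration $\gamma_n \downarrow 1$ would have to produce such a bound \emph{ab initio} at each stage, which is precisely the open question; the clause ``provided the loss at each step can be bounded uniformly'' is an assumption doing all the work. The quantitative Subspace Theorem route faces the difficulty you yourself name: the results of Adamczewski--Bugeaud invoked in the paper (for Theorem 6 and Corollary 7) give transcendence and irrationality-exponent information but are not effective in the sense of yielding an explicit bound on $q$ in terms of $\gamma, a, b, s$, and no effective variant applicable here is cited or constructed. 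What you do get right, and which agrees with the paper's own remark of optimality in Section 3.4, is the diagnosis of \emph{where} $\gamma_0$ enters: along the Fibonacci branch of the tree (5), the adjacent pair of denominators $(F_{l+1}, F_{l+2})$ forces the exponent $(F_{l+1}+F_{l+2})/F_{l+2} \to \gamma_0$ in the inequality produced by Lemma 8, so one cannot do better than $b > a^{\gamma_0}$ using Lemma 8 alone. But identifying the bottleneck is not the same as removing it, and as matters stand the conjecture remains open.
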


Our proofs of Theorems 1 and 2 are  based on an arithmetical analysis of formulae giving the rotation
number $\rho_{\lambda,\delta}$ in terms of the parameters $\lambda$ and $\delta$. 
As far as we are aware, it is in the works of   E. J. Ding and P. C. Hemmer \cite{DH} and  Y. Bugeaud  \cite{Bu} that appears the first complete description of the relations between the parameters $ \lambda, \delta$ and the rotation number $\rho_{\lambda,\delta}$. 
For $0<\lambda<1$  fixed, these papers deal with the variation of the rotation number in the one-dimensional family of contracted rotations $f_{\lambda,\delta}$ as $\delta$ runs through the interval $[0,1)$. 
We summarize the results that we need in the following 

\begin{theorem} 
Let $0<\lambda<1$ be given. Then the application   $\delta \mapsto \rho_{\lambda, \delta}$
is a continuous non decreasing function sending $I$ onto $I$ and satisfying the following properties: 
 \\
(i) The rotation number  $\rho_{\lambda,\delta}$ vanishes exactly when  $0\le \delta\le 1-\lambda$.
\\
(ii) Let $\displaystyle\frac{p}{q}$ be a positive rational number, where $0<p<q$ are relatively prime integers. Then   $  \rho_{\lambda,\delta}$ takes the value $\displaystyle{p\over q} $ if and only if $\delta$ is located in the interval
$$
\frac{1-\lambda}{1-\lambda^q} c\left(\lambda,\frac{p}{q}\right) \le \delta \le \frac{1-\lambda}{1-\lambda^q} \left( c\left(\lambda,\frac{p}{q}\right) + \lambda^{q-1}-\lambda^q \right),
$$
where
$$
c\left(\lambda,\frac{p}{q}\right) =  1+\sum_{k=1}^{q-2}  \left( \left[ (k+1) \frac{p}{q} \right] -  \left[ k \frac{p}{q} \right] \right)\lambda^{k}
$$
and the above sum equals $0$ when $q=2$.
\\
(iii) For every irrational number $\rho$ with $0 < \rho < 1$, there exists one and only one 
real number $\delta$ such that $0 <\delta < 1$ and   $\rho_{\lambda,\delta}= \rho$
 which is given by the formula
$$
\delta=\delta(\lambda,\rho)=(1-\lambda) \left( 1 +  \sum_{k=1}^{+\infty }\left(  \left[ (k+1) \rho \right] -  \left[ k \rho \right] \right)\lambda^{k} \right).
\eqno{(1)}
$$
\end{theorem}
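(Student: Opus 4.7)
I would start by fixing the non-decreasing degree-one lift $\widetilde f(x) = \lambda\{x\}+\delta+[x]$ of $f_{\lambda,\delta}$ to $\mathbb{R}$, for which $\rho_{\lambda,\delta}=\lim_{n\to\infty}\widetilde f^{n}(x)/n$ is well defined and independent of $x$. Pointwise monotonicity of $\widetilde f$ in $\delta$ yields monotonicity of $\rho_{\lambda,\delta}$, and uniform continuity in $\delta$ yields its continuity. Writing each step as $f^{k+1}(x) = \lambda f^{k}(x)+\delta-\epsilon_{k}$ with $\epsilon_{k}=[\lambda f^{k}(x)+\delta]\in\{0,1\}$, telescoping gives the identity
$$
f^{n}(x) = \lambda^{n} x + \sum_{k=0}^{n-1}\lambda^{n-1-k}(\delta-\epsilon_{k}),
$$
and $\rho_{\lambda,\delta}$ coincides with the asymptotic frequency of $1$'s in $(\epsilon_{k})$.

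\textbf{Part (i).} If $\delta\le 1-\lambda$ then $\lambda x+\delta<1$ for every $x\in I$, so each $\epsilon_{k}=0$ and $f$ is the genuine affine contraction with attracting fixed point $\delta/(1-\lambda)$; hence $\rho_{\lambda,\delta}=0$. Conversely, when $\delta>1-\lambda$, inspection of the two affine branches shows that the graph of $f$ does not meet the diagonal, so no fixed point can exist and $\rho_{\lambda,\delta}>0$.

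\textbf{Part (ii).} Assume $\rho_{\lambda,\delta}=p/q$ with $\gcd(p,q)=1$ and $0<p<q$. A Banach-type argument applied to each branch of $f^{q}$, together with the monotonicity of $\delta\mapsto f^{q}$, exhibits a unique periodic orbit $y_{0}<y_{1}<\cdots<y_{q-1}$ of minimal period $q$. Since $f$ preserves the cyclic order of this orbit, its combinatorics must be those of the rigid rotation by $p/q$, which forces the sequence of $\epsilon$'s visited along one cycle to be the Sturmian word $\epsilon_{k}=[(k+1)p/q]-[kp/q]$, up to a cyclic shift. Setting $n=q$ in the iteration formula at the starting point and imposing periodicity solves for this point as an affine function of $\delta$. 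The admissible range of $\delta$ is delimited by the two degenerate configurations at which a periodic point meets either the discontinuity $(1-\delta)/\lambda$ or the origin $0$; computing these two values of $\delta$ explicitly and regrouping the Sturmian coefficients reproduces the stated bounds involving $c(\lambda,p/q)$ and the plateau length $(1-\lambda)(\lambda^{q-1}-\lambda^{q})/(1-\lambda^{q})$.

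\textbf{Part (iii) and surjectivity.} The preliminaries together with (i) and (ii) show that $\delta\mapsto\rho_{\lambda,\delta}$ sends $I$ onto $I$ and that each irrational rotation number has a unique preimage. To derive formula (1) I would choose a sequence of rationals $p_{n}/q_{n}\to\rho$; since the plateau widths from (ii) tend to $0$, continuity gives $\delta(\lambda,p_{n}/q_{n})\to\delta(\lambda,\rho)$, and the polynomial $c(\lambda,p_{n}/q_{n})$ converges coefficient-by-coefficient to $1+\sum_{k\ge 1}([(k+1)\rho]-[k\rho])\lambda^{k}$; multiplying by $1-\lambda$ yields (1). The principal obstacle is the combinatorial bookkeeping in Part (ii): fixing the correct cyclic shift that aligns the orbit's $\epsilon$-sequence with the canonical Sturmian word, and simplifying $\sum_{k}\epsilon_{k}\lambda^{q-1-k}$ into the stated expression $c(\lambda,p/q)$ while correctly identifying which periodic point realises each endpoint of the plateau.
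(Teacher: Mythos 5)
The paper does not actually prove Theorem~3; it explicitly presents it as a summary of known results due to Ding--Hemmer \cite{DH}, Bugeaud \cite{Bu}, and Bugeaud--Conze \cite{BuC}. The only part the paper re-derives is (iii), in Section~5.1, and there the paper's argument --- take rationals $p_n/q_n\to\rho$, observe that the intervals in (ii) shrink to a point because $q_n\to\infty$, use the continuity of the integer part away from integers to pass to the limit in the coefficients of $c(\lambda,p_n/q_n)$, and invoke the continuity of $\delta\mapsto\rho_{\lambda,\delta}$ --- is essentially identical to what you propose. So on (iii), and on the general framing via the monotone degree-one lift $F$, your approach matches the paper's.

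Where your proposal falls short of a proof is (ii), which is also the part the paper declines to prove. The difficulties you flag at the end are not ``bookkeeping''; they are the content of the theorem. Concretely: (a) you assert the \emph{existence} of a periodic orbit of minimal period $q$ whenever $\rho_{\lambda,\delta}=p/q$, but a Banach fixed-point argument on the $2^q$ branches of $f^q$ only shows that each branch has at most one fixed point somewhere in $\mathbb{R}$, not that one lies in the branch's actual domain in $I$; the existence really comes from the general theory of monotone degree-one (possibly discontinuous) circle maps and needs its own argument. (b) You assert that the orbit's code $(\epsilon_k)$ must be, up to a shift, the Sturmian word $[(k+1)p/q]-[kp/q]$; this is the heart of the computation and is stated without justification. (c) You describe the plateau endpoints as the parameter values where the orbit touches $0$ or the discontinuity $(1-\delta)/\lambda$, which is the right picture, but turning this into the exact closed-form bounds --- in particular getting the normalizer $(1-\lambda)/(1-\lambda^q)$ and the plateau width $\lambda^{q-1}-\lambda^q$ from $\sum_k\epsilon_k\lambda^{q-1-k}$ with the correct shift --- is precisely what you say you have not done. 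Finally, in (i) the claim ``the graph of $f$ does not meet the diagonal, hence $\rho>0$'' silently uses that for this class of maps a vanishing rotation number forces a fixed point; a cleaner route is to note directly that $F(x)-x=\delta-(1-\lambda)\{x\}\ge\delta-(1-\lambda)>0$ uniformly, which yields $\rho\ge\delta-(1-\lambda)>0$. As a blind sketch your outline points in the direction of the references, but it does not constitute a proof of (ii), and the paper itself does not supply one either.
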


The proof of Theorem 2 deeply relies on a tree structure, introduced by Y. Bugeaud and J.-P. Conze in \cite{BuC}, which is parallel  to the classical Stern-Brocot tree of rational numbers. It enables us to handle more easily the complicated intervals occuring in Theorem 3 (ii). 
Finally, using the same tools,   we give a simple proof that

\begin{theorem} 
Let $0< \lambda<1$ be given. Then  
$$
\{\delta \in I:\rho_{\lambda,\delta} \mbox{ is an irrational  number}\} 
$$
has zero Hausdorff dimension.
\end{theorem}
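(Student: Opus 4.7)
The plan is to construct, at each integer level $N\ge 1$, an explicit covering of the set
$$
S_\lambda := \{\delta \in I : \rho_{\lambda,\delta} \notin \mathbb Q \}
$$
by a family of $O(N^2)$ ``gap'' intervals whose total $\alpha$-Hausdorff sum tends to zero as $N\to\infty$, for every fixed $\alpha>0$. The gaps will be what remains of $I$ after deleting the interval $[0,1-\lambda]$ on which $\rho_{\lambda,\delta}=0$ (Theorem 3(i)) together with the intervals $J_{p/q}$ on which $\rho_{\lambda,\delta}=p/q$ (Theorem 3(ii)), as $p/q$ ranges over the Farey sequence $F_N$ of order $N$.

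Since $\delta\mapsto \rho_{\lambda,\delta}$ is continuous and non-decreasing, removing these $O(N^2)$ intervals leaves a disjoint union of $O(N^2)$ open ``gap'' intervals, and $S_\lambda$ is contained in their union. A typical interior gap $G$ lies between $J_{p_1/q_1}$ and $J_{p_2/q_2}$ where $p_1/q_1<p_2/q_2$ are Farey neighbors in $F_N$; in particular $p_2q_1-p_1q_2=1$ and $q_1+q_2\ge N+1$. By continuity and monotonicity of the rotation number, the Lebesgue measure of $G$ equals the sum of $|J_{p/q}|$ over all rationals $p/q$ strictly between $p_1/q_1$ and $p_2/q_2$.

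Standard Stern-Brocot theory identifies such intermediate fractions in lowest terms with the family $(ap_1+bp_2)/(aq_1+bq_2)$, $a,b\ge 1$ coprime. Combining this parametrization with the easy consequence $|J_{p/q}|\le (1-\lambda)\lambda^{q-1}$ of Theorem 3(ii), a double geometric series then gives
$$
|G| \;\le\; (1-\lambda)\sum_{a,b\ge 1}\lambda^{aq_1+bq_2-1} \;\le\; \frac{\lambda^{q_1+q_2}}{\lambda(1-\lambda)} \;\le\; \frac{\lambda^N}{1-\lambda}.
$$
The two extreme gaps touching $\delta=1-\lambda$ and $\delta=1$ satisfy a bound of the same order by directly computing the extremal endpoints of $J_{1/N}$ and $J_{(N-1)/N}$ from the formula in Theorem 3(ii).

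With at most $O(N^2)$ gaps of length at most $C(\lambda)\lambda^N$, for every $\alpha>0$ the $\alpha$-Hausdorff sum satisfies
$$
\sum_G |G|^\alpha \;\le\; C(\lambda,\alpha)\, N^2 \lambda^{\alpha N} \;\longrightarrow\; 0
$$
as $N\to\infty$, so $\mathcal H^\alpha(S_\lambda)=0$ for every $\alpha>0$, yielding $\dim_H S_\lambda = 0$. The main (and essentially only non-routine) ingredient is the Stern-Brocot parametrization of fractions lying between two Farey neighbors, which converts the Farey combinatorics into the double geometric sum above; once that is in hand, the super-exponential decay of $|J_{p/q}|$ in $q$ overwhelms the polynomial growth of $|F_N|$ and forces dimension zero.
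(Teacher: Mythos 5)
Your overall strategy is the same as the paper's: cover the set of $\delta$ with irrational rotation number by the ``gaps'' left after removing the plateaus $J_{p/q}$ (with $p/q$ of small denominator), bound the length of each gap exponentially in the Farey denominators, and let $N\to\infty$. The paper organizes the cover by rows $k$ of the Stern--Brocot tree (producing $2^{k-1}$ gaps at level $k$, with a refined count of how many gaps have given denominator sum), while you use the Farey sequence $F_N$ (producing $O(N^2)$ gaps). Both lead to the same conclusion, and your crude $O(N^2)$ count is perfectly adequate given the exponential decay.

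There is, however, one genuine gap in your argument. You assert that ``by continuity and monotonicity of the rotation number, the Lebesgue measure of $G$ equals the sum of $|J_{p/q}|$ over all rationals $p/q$ strictly between $p_1/q_1$ and $p_2/q_2$.'' Continuity and monotonicity alone do not give this: a continuous non-decreasing surjection of $I$ onto $I$ can be strictly increasing on a set of positive measure (consider the identity), so the plateaus need not exhaust the gap up to measure zero. What you are actually invoking is the \emph{singularity} of the map $\delta\mapsto\rho_{\lambda,\delta}$ (that $\mathrm{Leb}\{\delta:\rho_{\lambda,\delta}\notin\Q\}=0$), which is not part of the hypotheses and is in fact implied by, and hence should not be assumed in the proof of, the very statement you are proving. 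The inequality you need, $|G|\le\sum|J_{p/q}|$, is precisely the nontrivial direction that requires this fact; the trivial direction $\sum|J_{p/q}|\le|G|$ is the one that comes for free from disjointness.

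The fix is easy and brings you back to the paper's route: instead of summing the plateau lengths inside $G$, compute $|G|$ directly as the distance between the right endpoint of $J_{p_1/q_1}$ and the left endpoint of $J_{p_2/q_2}$. Using the explicit endpoints from Theorem~3(ii) written in the form $\frac{P}{Q}$ with $Q=1+\lambda+\cdots+\lambda^{q-1}$, the paper's identity~(8) gives
$$
|G|=\frac{P_2}{Q_2}-\frac{P_1+\lambda^{q_1-1}-\lambda^{q_1}}{Q_1}=\frac{\lambda^{q_1+q_2-1}}{Q_1Q_2}\le\lambda^{q_1+q_2-1}\le\lambda^{N},
$$
with no measure-theoretic input. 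From here your estimate $\sum_G|G|^\alpha\le C N^2\lambda^{\alpha N}\to 0$ goes through unchanged. Your Stern--Brocot parametrization of intermediate fractions and the resulting double geometric series are correct, but they turn out to be unnecessary once $|G|$ is computed directly. Alternatively, if you wish to keep your route, you must first \emph{prove} $\mathrm{Leb}(S_\lambda)=0$ (for instance via the identity $\sum_{q\ge2}\phi(q)\frac{(1-\lambda)^2\lambda^{q-1}}{1-\lambda^q}=\lambda$, or by the direct gap computation above applied at a fixed level), and only then deduce $|G|=\sum|J_{p/q}|$.
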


 The paper is organized as follows. In Section 2, we prove  Theorem 1 and give another claim (Corollary 7) about the diophantine  nature of the number $\delta(\lambda,\rho)$.  To that purpose, we  use some results coming from Transcendental  Number Theory which are briefly recalled. 
The proof of Theorem 2 is achieved in Section 3. It combines the tree structure already mentioned with an application of Liouville's inequality in our context (Lemma 8). 
In Section 4 we provide a proof of  Theorem 4  based  again on  an analysis of the same tree structure.   For completeness, we present in Section 5 various known results in connexion with Theorem 3.  We show   how the formula (1) is obtained and describe the dynamics of $f_{\lambda,\delta}$ in the case of an irrational rotation number. This section also 
  includes  a  proof that $f_{\lambda,\delta}$ has at most one periodic orbit,  as a consequence of a general statement of unicity.  Finally, we give  some useful properties of Liouville numbers  in an Appendix.

 \section{Transcendental  numbers}

 Let us begin with the following result on transcendence due to Loxton and Van der Poorten  \cite[Theorem 7]{LoVdPA}.
 
\begin{theorem} 
Let $p(x)$ be a non-constant polynomial with algebraic coefficients and $\rho$ an irrational real number, then the power series
$$
\sum_{k=1}^\infty p([k \rho]) \lambda^k
$$
takes a transcendental value for any  algebraic number $\lambda$ with $0<\vert \lambda \vert <1$.
\end{theorem}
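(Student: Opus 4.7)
The plan is to apply Mahler's method for transcendence of values of analytic functions. Since the hypothesis concerns an algebraic linear combination of the basic power series $S_j(\lambda):=\sum_{k\ge 1}[k\rho]^j\lambda^k$ for $0\le j\le \deg p$, what is really needed is the algebraic independence of $S_0,S_1,\ldots,S_{\deg p}$ as complex numbers, since any nontrivial algebraic combination of algebraically independent numbers is transcendental. I would therefore aim straight for this joint algebraic independence; the single transcendence of $\sum p([k\rho])\lambda^k$ is then an immediate consequence, as $c_{\deg p}\ne0$ precludes any algebraic collapse.

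The natural object encapsulating all the $S_j$ simultaneously is the two-variable Hecke–Mahler generating function
$$
F(z,w) = \sum_{k\ge 1} z^k w^{[k\rho]},
$$
whose successive $w$-derivatives, specialized at $w=1$, recover the $S_j(\lambda)$ up to explicit combinatorial factors. Writing $\rho=[0;a_1,a_2,\ldots]$ with convergents $p_n/q_n$, the classical recursion for the Beatty sequence $([k\rho])_{k\ge 1}$ induced by the continued fraction algorithm translates into a family of functional equations
$$
F(z,w) = R_n(z,w) + A_n(z,w)\, F(z^{q_n},w^{p_n}),
$$
with $R_n,A_n\in \Q(z,w)$. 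This places $F$ into the framework of Mahler functions in two variables, the determinants $\pm1$ of the substitution matrices on the exponent lattice being exactly the ones produced by the continued fraction algorithm.

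Next I would verify that $F$ is transcendental over $\C(z,w)$; this is forced by the aperiodicity of the Beatty sequence for irrational $\rho$, since an algebraic formal power series in two variables would impose a rigid polynomial constraint on its Taylor coefficients incompatible with the Sturmian complexity of $([k\rho])$. Mahler's criterion then delivers the transcendence of $F(\lambda,1)$ for every algebraic $\lambda$ with $0<|\lambda|<1$ outside a finite set of excluded values dictated by the functional equations. To upgrade this single transcendence to the joint algebraic independence of the specialized derivatives $\partial_w^jF(\lambda,1)$, I would formally differentiate the functional equations in $w$ and iterate, obtaining a triangular system that, combined with the multi-dimensional form of Mahler's theorem, yields the required independence.

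The main obstacle is controlling the nondegeneracy of this differentiated system: one has to rule out that the substitutions $(z,w)\mapsto(z^{q_n},w^{p_n})$ preserve a proper algebraic subvariety passing through the iterated images of the specialization point $(\lambda,1)$. This amounts to a careful tracking of vanishing orders and Jacobians along the orbit, and its successful execution hinges on exploiting the unbounded growth of the denominators $q_n$ along an arbitrary irrational $\rho$. Handling such $\rho$—rather than, say, only quadratic ones where the functional equations close up into a single self-referential relation—is where the Loxton–Van der Poorten analysis draws on the fine arithmetic of the continued fraction expansion, and where I would expect the bulk of the technical work to lie.
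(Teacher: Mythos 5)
The paper does not prove this statement; it quotes it as Theorem 7 of Loxton and Van der Poorten \cite{LoVdPA}, with an alternative reference to Sections 2.9--2.10 of Nishioka's monograph \cite{Ni}, and then invokes it as a black box (only the degree-one case, i.e.\ the transcendence of the Hecke--Mahler series $S_\rho(\lambda)=\sum_{k\ge1}[k\rho]\lambda^k$, is actually used downstream). Your sketch therefore cannot be compared to any argument in the paper itself, but it is broadly faithful to the strategy of the cited sources: the two-variable function $F(z,w)=\sum_{k\ge1}z^k w^{[k\rho]}$ and the chain of functional equations driven by the continued-fraction expansion of $\rho$ are precisely the ingredients of the Mahler-method proof in \cite{LoVdPA} and \cite{Ni}.

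Two points are worth flagging. First, the reduction to joint algebraic independence of $S_0,S_1,\dots,S_{\deg p}$ is misstated: $S_0(\lambda)=\sum_{k\ge1}\lambda^k=\lambda/(1-\lambda)$ is algebraic whenever $\lambda$ is, so it cannot belong to an algebraically independent family. What one really needs is the algebraic independence of $S_1,\dots,S_{\deg p}$ over $\overline{\Q}$; the transcendence of $c_{\deg p}S_{\deg p}+\cdots+c_1S_1+c_0S_0$ then follows because $c_{\deg p}\ne 0$ while $c_0S_0$ is algebraic. Second, the proposal compresses into a single closing sentence the entire technical core of Loxton--Van der Poorten: ruling out that the iterated substitutions trap the orbit of $(\lambda,1)$ inside a proper subvariety, handling arbitrary irrational $\rho$ (not just quadratic ones for which the functional equation closes up self-referentially), and coping with the non-autonomous nature of the system since the exponent matrices $\bigl(\begin{smallmatrix}q_n & 0\\ 0 & p_n\end{smallmatrix}\bigr)$ change with $n$. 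As written this is a programme rather than a proof. That is not a defect of the approach, only a gap between outline and execution; but since the paper offers no proof of its own, the appropriate comparison is with \cite{LoVdPA} and \cite{Ni}, where these difficulties are addressed in full.
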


We only need Theorem 5 for a polynomial $p(x)$ of degree one, in which case the statement is equivalent to the transcendency of the so-called Hecke-Mahler series 
$$
S_\rho(\lambda)  : =\sum_{k\ge 1} [k \rho] \lambda^k.
$$
A proof can also be found in Sections 2.9 and 2.10 of the monograph \cite{Ni}. See also the survey article \cite{LoVdPB}.

\subsection{Proof of Theorem 1}
We use the fact that  the number $\delta(\lambda,\rho)$ in  $(1)$ may be  expressed in term  of the  Hecke-Mahler  series, thanks to  the useful identity
$$
\sum_{ k \ge 0} \Big([(k+1)\rho]- [k \rho]\Big)\lambda^k = \left({1\over \lambda}-1\right) S_\rho(\lambda), \eqno{(2)}
$$
relating the Hecke-Mahler series with the power series associated to Sturmian sequences.
Using $(2)$, we  rewrite  formula $(1)$ in the form 
$$
\delta(\lambda,\rho)= 1-\lambda +  \frac{(1-\lambda)^2}{\lambda}  \sum_{k=1}^\infty [k\rho] \lambda^{k}=  \frac{(1-\lambda)^2}{\lambda}  \sum_{k=1}^\infty ([k\rho]+1) \lambda^{k}.
\eqno{(3)}
$$
Applying Theorem 5 to the polynomial 
$$
p(z)= \frac{(1-\lambda)^2}{\lambda}z +\frac{(1-\lambda)^2}{\lambda} 
$$
with algebraic non-zero coefficients, we  obtain that $\delta(\lambda,\rho)$ is a transcendental number.
As a consequence, if $\delta$ is an algebraic number, the rotation number $\rho_{\lambda,\delta}$ cannot be an irrational number $\rho$. It is therefore a rational number. 
\cqd

\subsection{Liouville numbers}

We  are concerned here with the diophantine nature of the transcendental number $\delta(\lambda,\rho)$ when $\lambda$ is the reciprocal of an integer. Let us start with the following

\begin{theorem} 
Let  $b\ge 2$ be an integer and let  $0<\rho<1$ be an irrational number. Then,  the number 
$$
S_\rho\left( {1\over b} \right) = \sum_{k=1}^\infty {[k \rho] \over b^k}
$$
is a Liouville number if, and only if, the partial quotients of the continued fraction expansion of the  irrational number  $\rho$ are unbounded.
\end{theorem}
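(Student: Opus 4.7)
The plan is to prove the two implications separately. Let $\rho = [0; a_1, a_2, \ldots]$ have convergents $p_n/q_n$ and set $\eta_n := \rho - p_n/q_n$; recall $|\eta_n| < 1/(q_n q_{n+1})$, the signs of $\eta_n$ alternate with $n$, and $q_{n+1} \ge a_{n+1} q_n$.

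For the implication ``unbounded partial quotients $\Rightarrow$ Liouville'', the plan is to build explicit rational approximants from the convergents. Set
$$
r_n := S_{p_n/q_n}(1/b) = \sum_{k \ge 1} \frac{[kp_n/q_n]}{b^k}.
$$
The shift relation $[(k+q_n)p_n/q_n] = [kp_n/q_n] + p_n$ shows, via a geometric series rearrangement, that $r_n$ is rational with denominator $Q_n$ dividing $(b-1)(b^{q_n}-1)$, in particular $Q_n = O(b^{q_n})$. I would then estimate the discrepancy term by term: for $k < q_{n+1}$ with $q_n \nmid k$, the fractional part $\{kp_n/q_n\}$ is a nonzero multiple of $1/q_n$, hence at distance $\ge 1/q_n$ from $\Z$, while $|k\eta_n| < 1/q_n$, so $[k\rho] = [kp_n/q_n]$; at the multiples $k = j q_n$ with $1 \le j \le a_{n+1}$, the discrepancy is $0$ if $\eta_n > 0$ and $-1$ if $\eta_n < 0$; and the tail $k \ge q_{n+1}$ contributes $O(b^{-q_{n+1}})$ via the crude bound $|[k\rho]-[kp_n/q_n]| \le k/(q_n q_{n+1}) + 1$. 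This yields
$$
|S_\rho(1/b) - r_n| = O\!\left(b^{-q_{n+1}}\right) \qquad (\eta_n > 0).
$$
For the other parity $\eta_n < 0$, the accumulated discrepancy at the multiples of $q_n$ is absorbed by the rational correction $-1/(b^{q_n}-1) = -\sum_{j \ge 1} b^{-jq_n}$, producing the approximant $\tilde r_n := r_n - 1/(b^{q_n}-1)$, which has the same denominator bound $O(b^{q_n})$ and satisfies the same estimate $|S_\rho(1/b) - \tilde r_n| = O(b^{-q_{n+1}})$. In either parity, the approximation exponent is at least $q_{n+1}/q_n - o(1) \ge a_{n+1}$; unboundedness of $(a_n)$ along a subsequence therefore forces the exponent to infinity, proving that $S_\rho(1/b)$ is a Liouville number.

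For the converse ``bounded partial quotients $\Rightarrow$ not Liouville'', the plan is to invoke an effective refinement of Theorem 5. When $\rho$ has bounded partial quotients (so $q_{n+1}/q_n$ is uniformly bounded), Mahler's method, in the form developed by Loxton--van der Poorten and subsequent authors (see \cite{Ni,LoVdPB}), provides a quantitative transcendence measure for $S_\rho(1/b)$, bounding its irrationality exponent by a finite constant depending only on $\rho$ and $b$. Since a Liouville number has infinite irrationality exponent, this rules out the Liouville property for $S_\rho(1/b)$.

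The principal obstacle is the converse direction, which relies on a non-trivial effective transcendence statement that we would cite from the literature rather than reprove. The forward direction, by contrast, is elementary but delicate: the main subtlety is the handling of the two signs of $\eta_n$ (equivalently, the two parities of $n$), which requires the small rational correction $-1/(b^{q_n}-1)$ to the naive approximant $r_n$ in the unfavorable case, so that the denominator bound is preserved while the accumulated jumps at the multiples of $q_n$ are cancelled.
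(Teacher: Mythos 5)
Your forward direction (``unbounded partial quotients $\Rightarrow$ Liouville'') is a self-contained, explicit argument via the convergents and is essentially correct: the term-by-term comparison of $[k\rho]$ with $[kp_n/q_n]$ for $k<q_{n+1}$, the parity-dependent correction $-1/(b^{q_n}-1)$, the denominator bound $(b-1)(b^{q_n}-1)$ for $r_n$, and the resulting error $O(b^{-q_{n+1}})$ all check out, and together they do give an approximation exponent tending to infinity along a subsequence with $a_{n+1}\to\infty$. This is in effect a direct proof of B\"ohmer's theorem, which the paper does not reprove but simply cites from \cite{Boh}; your route is more elementary and self-contained at the price of more computation. Note also that the paper first passes from $S_\rho(1/b)$ to the Sturmian number $\sum_{k\ge1}([(k+1)\rho]-[k\rho])b^{-k}$ via formula $(2)$ and Lemma 11 (Liouville property is preserved under $\xi\mapsto r\xi+s$), whereas you argue directly on $S_\rho(1/b)$; both are fine.

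The converse direction (``bounded partial quotients $\Rightarrow$ not Liouville''), however, is a genuine gap. You claim that Mahler's method ``in the form developed by Loxton--van der Poorten and subsequent authors (see \cite{Ni,LoVdPB})'' yields a quantitative transcendence measure for $S_\rho(1/b)$ bounding its irrationality exponent by a finite constant. The cited sources establish transcendence of Hecke--Mahler values, but they do not supply the finite irrationality exponent you need: the functional equations underlying the Mahler-method treatment of $S_\rho$ are built from the continued fraction of $\rho$, and the quantitative measures one extracts degrade as the expansion is iterated, so a fixed polynomial lower bound $|S_\rho(1/b)-p/q|\gg q^{-C}$ is not what these results give. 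The paper avoids this entirely: after reducing to the Sturmian number via $(2)$ and Lemma 11, it invokes \cite[Proposition 11.1]{AdBu}, which is a sharp statement tailored to Sturmian base-$b$ numbers, asserting precisely that such a number has finite irrationality exponent if and only if the slope $\rho$ has bounded partial quotients. That is the specific ingredient your proposal is missing; without it (or an equivalent, explicitly identified, effective estimate), the converse implication is not established.
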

\begin{proof}
By formula $(2)$ and Lemma 11 below, we have to prove equivalently that the Sturmian number
in base $b$
$$
\sum_{ k \ge 1} {[(k+1)\rho]- [k \rho]\over b^k}
$$
is a Liouville number exactly when $\rho$ has unbounded partial quotients.
 P.E. B$\ddot{\o}$hmer has proved  in \cite{Boh} that this number
is a Liouville number when the partial quotients of $\rho$ are unbounded. 
For the converse, notice that the sequence $( [(k+1)\rho] -[k\rho])_{k\ge 1}$ is a Sturmian sequence with values 
in $\{ 0,1\}$ and slope $\rho$. Now, it follows from   \cite[Proposition  11.1]{AdBu} that the associated real number $\displaystyle \sum_{k=1}^\infty  {[(k+1)\rho] -[k\rho] \over b^k}$ in base $b$ is transcendental and it has a finite irrationality exponent exactly when the slope $\rho$ has bounded partial quotients.
\end{proof}

\begin{corollary} 
Let  $\displaystyle \lambda =\frac{1}{b} $, where $b\ge 2$ is an integer,  and let $\rho$ be an irrational number with 
$0< \rho <1$. Then the transcendental number $\displaystyle \delta\left(\frac{1}{b},\rho \right)$ is a Liouville number if and only if the partial quotients of the continued fraction expansion of  $\rho$ are unbounded. 
\end{corollary}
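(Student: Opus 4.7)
The plan is to observe that $\delta(1/b,\rho)$ differs from the Hecke–Mahler value $S_\rho(1/b)$ only by a nonzero rational affine transformation, so that the Liouville property transfers directly from one to the other, and then to invoke Theorem 6.

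More precisely, I would first specialize formula $(3)$ to $\lambda = 1/b$, which gives
$$
\delta\!\left(\tfrac{1}{b},\rho\right) \;=\; \frac{b-1}{b} \;+\; \frac{(b-1)^2}{b}\, S_\rho\!\left(\tfrac{1}{b}\right).
$$
This presents $\delta(1/b,\rho)$ as $\alpha + \beta\, S_\rho(1/b)$ with $\alpha,\beta \in \Q$ and $\beta \neq 0$, so that $\delta(1/b,\rho)$ and $S_\rho(1/b)$ generate the same $\Q$-affine orbit.

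Next I would appeal to the standard fact, to be stated as Lemma 11 in the Appendix, that the set of Liouville numbers is stable under nonzero rational affine transformations: if $\xi$ is a Liouville number and $\alpha, \beta \in \Q$ with $\beta \neq 0$, then $\alpha + \beta\xi$ is again a Liouville number (and conversely, by applying the inverse transformation). Combined with the displayed identity, this says that $\delta(1/b,\rho)$ is a Liouville number if and only if $S_\rho(1/b)$ is.

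Finally, Theorem 6 characterizes exactly when $S_\rho(1/b)$ is a Liouville number, namely when the partial quotients of $\rho$ are unbounded, and this yields the corollary. There is no genuine obstacle here beyond keeping track of the nonzero rational coefficient in $(3)$ and citing the Appendix lemma; all the real work has already been done in the proof of Theorem 6 via the result of B\"ohmer and the transcendence/irrationality-exponent analysis of Sturmian numbers from \cite{AdBu}.
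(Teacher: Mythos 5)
Your proposal is correct and follows essentially the same route as the paper: specialize formula $(3)$ to $\lambda=1/b$, apply Lemma 11 on the stability of Liouville numbers under nonzero rational affine maps, and conclude via Theorem 6. No differences worth noting.
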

\begin{proof} Using formula (3), we have
$$
\delta \left( \frac{1}{b},\rho \right)=  \frac{b-1}{b} + \frac{(b-1)^2}{b} \sum_{k=1}^\infty {[k\rho] \over b^k}.
$$
It follows from Lemma 11 that $ \delta \left( \frac{1}{b},\rho \right)$ is a Liouville number
if and only if $S_\rho\left({1\over b}\right)$ is a Liouville number.
According to Theorem 6,  this happens exactly  when $\rho$ has unbounded partial quotients. 
\end{proof}

\section{ Rational parameters}

We prove Theorem 2  in this Section. To that purpose, we make use of   results due to  Y. Bugeaud and J.-P. Conze \cite{BuC},  who have shown  that the numbers 
$\displaystyle \frac{1-\lambda}{1-\lambda^q} c\left(\lambda,\frac{p}{q}\right) $ occurring in Theorem 3 (ii) share 
a combinatorial structure parallel to  the classical Stern-Brocot  tree of rational numbers (called the Farey tree in \cite{BuC}).

\subsection{ On the Stern-Brocot tree}

The Stern-Brocot tree, restricted to the interval $[0, 1]$,  may be described as an infinite   sequence of  finite rows, indexed by $k\ge 1$,
consisting of ordered  rational numbers $\displaystyle {p\over q } \in [0,1]$ and  starting with 
$$
\begin{aligned}
k = 1 : \quad & {0\over 1},{ 1\over 1 } 
\\
k= 2 : \quad & { 0\over 1},{1 \over 2},{ 1\over 1}
\\
k= 3 : \quad & { 0\over 1},{ 1\over 3},{1 \over 2}, { 2\over 3}, {1\over 1}
\\
  \dots \, \,   : \quad & 
\dots \dots  \end{aligned}
\eqno{(4)}
$$
The $(k+1)$-th row is constructed from the $k$-th row by inserting  between any pair of  consecutive elements $\displaystyle {p\over q} < { p'\over q'}$ on the $k$-th row,  their  mediant $\displaystyle {p+p'\over q+q'}$.
 Moreover, every rational number $\displaystyle 0 \le {p\over q} \le 1$
appears in (4).

Let $\lambda$ be a real number with $0< \lambda \le 1$. We consider the  analogous tree
$$
\begin{aligned}
k = 1 : \quad & {0\over 1},{ 1\over 1 } 
\\
k= 2 : \quad & { 0\over 1},{1 \over 1+ \lambda},{ 1\over 1}
\\
k= 3 : \quad & { 0\over 1},{ 1\over 1+ \lambda + \lambda^2},{1 \over 1+ \lambda}, { 1+\lambda \over 1+ \lambda + \lambda^2}, {1\over 1}
\\
  \dots \, \,   : \quad & \dots \dots \end{aligned}
\eqno{(5)}
$$
Any element $\displaystyle {P\over Q}$ of the tree  is a rational function in $\lambda$, where the numerator $P=P(\lambda)$ and the denominator $Q= Q(\lambda)$ are polynomials in $\lambda$.
The tree is constructed by a similar  process with the new mediant rule
$$
{P' +\lambda^{q'} P \over Q' + \lambda^{q'}Q}
$$
between consecutive elements $\displaystyle {P\over Q} < {P'\over Q'}$ where  $q' = Q'(1)$.  In the case $\lambda = 1$, we recover
the Stern-Brocot tree (4). Notice that for every $\displaystyle {P\over Q}$ appearing in (5), the rational number $\displaystyle {p\over q} = {P(1)\over Q(1)}$ appears at the same place in (4). It is proved in \cite{BuC} that the rational fractions $\displaystyle {P\over Q}$ appearing in (5) are precisely those which are involved in Theorem 3 (ii);  namely that we have the explicit formula: 
$$
{P\over Q} =\frac{1-\lambda}{1-\lambda^q} c\left(\lambda,\frac{p}{q}\right)
=
{c\left(\lambda,\frac{p}{q}\right) \over 1 + \lambda + \cdots +  \lambda^{q-1}},
$$
where $\displaystyle {p\over q} = {P(1)\over Q(1)}$ and $\displaystyle c\left(\lambda,\frac{p}{q}\right)$ is defined in Theorem 3 (ii). 
We further define $\displaystyle c\left(\lambda,{0\over 1}\right)=0$ and $\displaystyle c\left(\lambda,{1\over 1}\right)=1$   in order to extend the validity of the formula to the endpoints  $0$ and $1$.
Moreover, if $\displaystyle {P\over Q} < {P'\over Q'}$  are adjacent elements on  some row of (5), then 
$$
{P'\over Q'} - {P\over Q} = {\lambda^{q-1} \over Q Q'}
> {\lambda^{q-1}(1-\lambda) \over Q }, \eqno{(6)}
$$
where $q = Q(1)$. It follows from (6) that
$$
{P\over Q} \le  {P+\lambda^{q-1} -\lambda^q \over Q} < {P' \over Q'}.
$$
The assertions (i) and (ii) of Theorem 3 mean   that the rotation number $\rho_{\lambda, \delta}$
is equal to  $p/q$ exactly when $\delta $ is located in the interval 
$$
\left[ {P\over Q} ,  {P+\lambda^{q-1} -\lambda^q \over Q}\right] \subset
 \left[ {P\over Q} ,{P' \over Q'} \right].
 $$

\subsection{A lemma}
From now on, put   $\lambda = a/b$ and $\delta = r/s$ as in Theorem 2.  The following result, based on Liouville's inequality, will be our main tool for proving Theorem 2.

\begin{lemma} 
Let $\displaystyle 0 \le  {p\over q} < {p'\over q'} \le 1$ be consecutive  rational numbers on some row of the Stern-Brocot tree (4). Put
$$
P = c\left(\lambda,\frac{p}{q}\right), \quad Q = 1 + \cdots + \lambda^{q-1}, \quad
P' = c\left(\lambda,\frac{p'}{q'}\right) \quad {\rm and } \quad Q' = 1 + \cdots + \lambda^{q'-1}.
$$
If we assume that 
$$
{P+\lambda^{q-1} -\lambda^q \over Q} < \delta < {P' \over Q'}, \eqno{(7)}
$$
then we have the inequality
$$
b^{ \max (q,q')} \le s b a^{q+q'}.
$$
\end{lemma}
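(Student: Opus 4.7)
\emph{Plan.} I would express $\alpha := (P + \lambda^{q-1} - \lambda^q)/Q$ and $\beta := P'/Q'$ as explicit rationals in $a,b$ with denominators $D_1,D_2$ one can control, derive a closed form for $\beta - \alpha$ from the identity hidden in~(6), and then apply the separation of distinct rationals (``Liouville'') to $r/s$.

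\emph{Denominators.} Since $P$ and $P'$ have integer coefficients of degrees at most $q-2$ and $q'-2$, both $b^{q-1}P(a/b)$ and $b^{q'-1}P'(a/b)$ are integers, while the geometric sum $b^{n-1}Q_n(a/b) = (b^n-a^n)/(b-a)$ clears the denominator of $Q(a/b)$. Multiplying $\alpha$ through by $b^q$, one obtains
\[
\alpha = \frac{b^q P(a/b) + a^{q-1}(b-a)}{D_1}, \quad D_1 := \frac{b(b^q-a^q)}{b-a}; \qquad \beta = \frac{b^{q'-1}P'(a/b)}{D_2}, \quad D_2 := \frac{b^{q'}-a^{q'}}{b-a},
\]
with $D_1,D_2$ positive integers. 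The geometric-series expansions give the crude but essential bounds $D_1 \ge b^q$ and $D_2 \ge b^{q'-1}$, the only facts about denominators needed at the end.

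\emph{Closed form for $\beta-\alpha$.} Clearing denominators in~(6) gives the polynomial identity $P'Q - PQ' = \lambda^{q-1}$; combined with $Q'(1-\lambda) = 1-\lambda^{q'}$, a short manipulation yields
\[
\beta - \alpha = \frac{P'Q - PQ' - \lambda^{q-1}(1-\lambda)Q'}{QQ'} = \frac{\lambda^{q-1}\bigl(1 - Q'(1-\lambda)\bigr)}{QQ'} = \frac{\lambda^{q+q'-1}}{QQ'}.
\]
Substituting $\lambda = a/b$ and using the explicit forms of $Q(a/b), Q'(a/b)$ collapses this to the clean integer-denominator identity $\beta - \alpha = a^{q+q'-1}/(D_1 D_2)$.

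\emph{Liouville and conclusion.} Since $\alpha, r/s, \beta$ are three distinct rationals with $\alpha < r/s < \beta$, the integers $rD_1 - sN_\alpha$ and $sN_\beta - rD_2$ (with $N_\alpha, N_\beta$ the numerators above) are positive, so $r/s - \alpha \ge 1/(sD_1)$ and $\beta - r/s \ge 1/(sD_2)$. Summing and comparing with the closed form,
\[
\frac{D_1 + D_2}{sD_1 D_2} \le \beta - \alpha = \frac{a^{q+q'-1}}{D_1 D_2},
\]
so $D_1 + D_2 \le sa^{q+q'-1}$. Now $D_1 \ge b^q$ gives $b^q \le sa^{q+q'-1} \le sba^{q+q'}$, and $D_2 \ge b^{q'-1}$ gives $b^{q'} \le sba^{q+q'-1} \le sba^{q+q'}$; taking the maximum yields the lemma. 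The main obstacle is the telescoping in the closed-form step---verifying $P'Q - PQ' - \lambda^{q-1}(1-\lambda)Q' = \lambda^{q+q'-1}$ and checking that the $b$-power bookkeeping when rewriting $QQ'$ in terms of $D_1 D_2$ matches exactly; once these are in place, the Liouville step and the final arithmetic are entirely mechanical.
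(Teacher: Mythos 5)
Your proof is correct and follows essentially the same route as the paper: the telescoping identity for $\beta-\alpha=\lambda^{q+q'-1}/(QQ')$ is exactly the paper's inequality (8), and the lower bounds $r/s-\alpha\ge 1/(sD_1)$, $\beta-r/s\ge 1/(sD_2)$ are the paper's (9)--(10) in different notation (note $D_1=b^qQ(a/b)$ and $D_2=b^{q'-1}Q'(a/b)$). The only cosmetic difference is that you sum the two Liouville bounds and compare with $\beta-\alpha$, while the paper compares each one with $\beta-\alpha$ individually; the concluding arithmetic is the same.
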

\begin{proof}
It follows  from (6) and (7) that
$$
\begin{aligned}
\max & \left(  {P' \over Q'} -\delta ,  \delta -{P+\lambda^{q-1}-\lambda^q\over Q}\right)
   < {P' \over Q'} - {P+\lambda^{q-1}-\lambda^q\over Q} =  
{\lambda^{q-1}\over Q Q'} -{\lambda^{q-1}-\lambda^q\over Q}
\\
& = {\lambda^{q-1}(1-\lambda)\over Q} \left( {1\over 1-\lambda^{q'}}- 1\right) 
={\lambda^{q+q'-1}\over Q Q'}= {a^{q+q'-1} \over b^{q+q'-1}QQ'},
\end{aligned}
\eqno{(8)}
$$
since $\displaystyle Q' = 1+\cdots + \lambda^{q'-1}= {1-\lambda^{q'}\over 1 -\lambda}$. On the other hand, we have the lower  bound
$$
{P' \over Q'} -\delta= {s P' -rQ'\over sQ'} = {b^{q'-1}(s P' -rQ')\over b^{q'-1}sQ'}  
  \ge  { 1\over b^{q'-1} sQ'},
\eqno{(9)} 
$$
observing that  the numerator
$$
b^{q'-1} (sP'-rQ')=  s \left(   b^{q'-1} + \sum_{j=1}^{q'-2} ([(j+1)p'/q']-[jp'/q'])a^j b^{q'-1-j}\right)
-r \sum_{j=0}^{q'-1} a^j b^{q'-1-j}  
$$
is an integer,  which is necessarily $\ge 1$ since it is the numerator of a positive rational number. 
In a similar way, we find that 
$$
 \delta -{P+\lambda^{q-1}-\lambda^q\over Q} \ge { 1\over b^{q} sQ}.
\eqno{(10)} 
$$
Combining the upper bound (8) with the lower bounds (9) and (10), we obtain the estimates
$$
b^q \le {s a^{q+q'-1}\over Q } \le s a^{q+q'-1}\quad {\rm and} \quad b^{q'-1} \le {sa^{q+q'-1}\over Q' } \le s a^{q+q'-1},
$$
which yield the slightly weaker  inequality $b^{ \max (q,q')} \le s b a^{q+q'}$, since $b>a \ge 1$.
\end{proof}

\subsection{ Proof of Theorem 2} Let $\gamma$ be the golden ratio and set
$$
k = \left[{\gamma  \log (sb) \over \log b -\gamma  \log a}\right]  \ge 1.
\eqno{(11)}
$$
We claim
that there exists a rational $0 \le {p\over q}< 1 $ appearing on the $(k+1)$-st row of (4) such that 
$$ 
 { c\left(\lambda, \displaystyle {p\over q} \right)\over 1 + \ldots + \lambda^{q-1}} \le \delta \le 
{c\left(\lambda, \displaystyle {p\over q}\right)+ \lambda^{q-1} - \lambda^q \over 1 + \ldots + \lambda^{q-1}}.
\eqno{(12)}
$$
 Then, the assertions (i) and (ii) of Theorem 3 yield  that $\rho_{\lambda, \delta} = p/q$.

In order to prove the claim, we first focus to  the $k$-th row of (5). There exist adjacent
elements $\displaystyle {P\over Q} < {P'\over Q'}$ on the $k$-th row of (5) such that $\delta$ is located  in  the  interval
$$
{P\over Q} \le  \delta < {P'\over Q'}.
$$
 Put $\displaystyle {p \over q} ={P(1)\over Q(1)}$ and $\displaystyle {p' \over q'} ={P'(1)\over Q'(1)}$. If $\delta$ falls in the subinterval
$$
{P\over Q} \le  \delta \le {P+ \lambda^{q-1} -\lambda^q\over Q},
$$
 then (12) is established. We may therefore assume that 
$$
{P+ \lambda^{q-1}-\lambda^q \over Q} <   \delta < {P'\over Q'}.
$$
Then, Lemma 8 gives the estimate
$$
b^{ \max (q,q')} \le s b a^{q+q'}. 
\eqno{(13)}
$$

We now consider the mediant $\displaystyle {P''\over Q''} ={P' +\lambda^{q'} P \over Q' + \lambda^{q'}Q}$ of $\displaystyle {P\over Q}$ and $\displaystyle {P'\over Q'}$ which is an element of the $(k+1)$-th row of (5). 
Put  $p'' = P''(1)= p+p'$ and $q'' = Q''(1)= q+q'$.
Since $\displaystyle {p\over q} < {p''\over q''}$ and $\displaystyle {p''\over q''} < {p'\over q'}$ are two pairs of consecutive rational numbers on the $(k+1)$-th row of the Stern-Brocot tree (4), 
we have the inequalities
$$
{P\over Q}< {P+ \lambda^{q-1}-\lambda^q \over Q}< {P''\over Q''} <{P''+ \lambda^{q''-1}-\lambda^{q'' }\over Q''}<  {P'\over Q'}. 
$$
 If $\delta$ belongs to  the subinterval
$$
{P''\over Q''} \le  \delta \le  {P''+ \lambda^{q''-1} -\lambda^{q''}\over Q''},
$$
 then (12) holds true with $p/q$ replaced by $p''/q''$. It thus remains to deal with the intervals
 $$
 {P+ \lambda^{q-1}-\lambda^q \over Q}< \delta <{P''\over Q''}
 \quad {\rm and  } \quad {P''+ \lambda^{q''-1} -\lambda^{q''}\over Q''} < \delta < {P'\over Q'}.
 $$
 In both cases, we shall reach a contradiction, due to the fact that the level $k$ has been selected large enough
 in (11).
 
 Assume first that
 $$
 {P+ \lambda^{q-1}-\lambda^q \over Q}< \delta <{P''\over Q''}.
$$
Then, Lemma 8  applied to the pair of adjacent  rationals $\displaystyle {p\over q} < {p''\over q''}$, gives now the estimate
$$
b^{q+q'} = b^{q''} = b^{ \max (q,q'')} \le s b a^{q+q''}= sba^{2q+q'}. \eqno{(14)}
$$
Raising  the inequalities (13) and (14) respectively to the powers $1/(q+q')$ and $1/(2q+q')$, we immediately obtain the upper bound
$$
b^{\max\left( {q\over q+q'},{q'\over q+q'},{q+q'\over 2 q+q'}\right)} \le (sb)^{ {1\over q+q'}} a.
\eqno{(15)}
$$

Putting  $\displaystyle \xi ={q\over q'}$, we bound from below the exponent
$$
\max\left( {q\over q+q'},{q'\over q+q'},{q+q'\over 2 q+q'}\right) \ge \max\left( {q\over q+q'},{q+q'\over 2 q+q'}\right) 
  = \max \left( {\xi\over 1+\xi},{1+\xi\over 1+2\xi}\right)  \ge {1 \over \gamma}.
$$
For the last inequality, it suffices to note that
$$
{\xi\over 1+\xi} \ge {1 \over \gamma } \,\,   \iff \,\,  \xi \ge {1 \over \gamma -1}
\quad \mbox{and} \quad
{1+ \xi\over 1+2\xi} \ge {1 \over \gamma } \,\,  \iff \,\, { \gamma -1 \over 2-\gamma}= {1 \over \gamma -1} \ge \xi.
$$
Observe that for each pair of adjacent numbers $\displaystyle {p\over q}< {p'\over q'}$ on the $k$-th row of (4), we have the lower bound $q+q' \ge k+1$. We then deduce  from (15) that 
$$ 
b^{1\over \gamma} \le (sb)^{{1\over k+1}} a, 
$$ 
or equivalently that $\displaystyle  k+1  \le {\gamma  \log (sb) \over  \log b -\gamma  \log a}$, in contradiction with our choice (11) for $k$. 

The second interval
$$
{P''+ \lambda^{q''-1} -\lambda^{q''}\over Q''} < \delta < {P'\over Q'}
$$
is treated in a similar way. In this case, the analogue of (14) writes now 
$$
b^{q+q'} = b^{q''} = b^{ \max (q',q'')} \le s b a^{q'+q''}= sba^{q+2q'}.
$$
Permuting  $q$ and $q'$, we are led to the same computations. The claim is proved.

It is well-known that each rational number on the $(k+1)$-th row of the Stern-Brocot tree (4) has a denominator $q$ bounded by the Fibonacci number
$$
F_{k+2} =  {1\over \sqrt{5}} \left( \gamma^{k+2} - \left(-{1/\gamma}\right)^{k+2} \right)
\le 
{1\over \sqrt{5}}  \gamma^{2 + {\gamma  \log (sb) \over  \log b -\gamma \log a}} +{1\over \sqrt{5}}
\le 
   \gamma^{2+  {\gamma \log (sb) \over  \log b -\gamma \log a}} ,
$$
since $\displaystyle k \le{\gamma \log (sb) \over  \log b -\gamma \log a}$ by (11). 
Theorem 2 is proved.
\cqd

\subsection{A remark of optimality}
We have considered in the preceding argumentation two consecutive rows of index $k$ and $k+1$ in the tree $(5)$.
Using the subsequent rows of indices $k+2, \dots$ does not necessarily produce a substantially better result, as  the following construction shows. 

Let 
$$
{\sqrt{5}-1\over 2}= {1\over \gamma} = [0; 1,1, 1, \dots]
$$
be the continued fraction expansion of the irrational number ${\sqrt{5}-1\over 2}$ whose sequence of convergents is $ 0 ={F_0\over F_1}, 1= {F_1\over F_2}, {F_2\over F_3}, \dots$, where we recall that $(F_l)_{l\ge 0}$ denotes the Fibonacci sequence. For every $l\ge 0$, set 
$$
P_l = c\left(\lambda, {F_l\over F_{l+1}}\right)
\quad\hbox{\rm and}\quad
Q_l = 1 + \cdots + \lambda^{F_{l+1}-1},
$$
so that ${P_l\over Q_l}$ is the element in the tree $(5)$ associated to the convergent ${F_l\over F_{l+1}}$. Now, 
$$
{F_{l+2}\over F_{l+3}}= {F_{l+1}+F_{l}\over F_{l+2}+ F_{l+1}}
$$ 
is the mediant of
${F_{l+1}\over F_{l+2}}$ and ${F_l\over F_{l+1}}$ in the Stern-Brocot tree $(4)$. Accordingly, ${P_{l+2}\over Q_{l+2}}$ is the mediant of ${P_{l+1}\over Q_{l+1}}$ and ${P_l\over Q_{l}}$ in the  tree $(5)$. It follows that
 ${P_l\over Q_l }< {P_{l+1}\over Q_{l+1}}$ (resp.  ${P_{l+1}\over Q_{l+1}} < {P_{l}\over Q_{l}}$)  are adjacent elements on the 
$(l+1)$-th row of the tree $(5)$ when $l$ is even (resp. odd). Therefore,  the open intervals
$$
I_l:=
\begin{cases}
\left( { P_l + \lambda^{F_{l+1}-1} -\lambda^{F_{l+1}}\over Q_l} , {P_{l+1}\over Q_{l+1}}\right)
 \quad \hbox{\rm if} \quad  l \,\, \hbox{\rm is  even}
\\
\left( { P_{l+1} + \lambda^{F_{l+2}-1} -\lambda^{F_{l+2}}\over Q_{l+1}} , {P_l\over Q_l}\right)
\quad \hbox{\rm if} \quad l \,\, \hbox{\rm  is  odd}
\end{cases}
,
\quad l\ge 0,
$$
form a decreasing sequence of nested intervals whose intersection reduces to the transcendental number $\delta(\lambda, 1/\gamma)$. Now, assuming that the rational $\delta={r\over s}$ belongs to the interval $I_l$, Lemma 8 gives us the inequality
$$
b \le (sb)^{{1\over F_{l+2}} }a^{{F_{l+1}+ F_{l+2}\over F_{l+2}}}.
$$
Observe that the exponent  ${F_{l+2}+ F_{l+1}\over F_{l+2}}$ converges to $\gamma$ as $l$ tends to infinity.
Thus, the  constraint $b> a^\gamma$ assumed in Theorem 2 cannot be essentially relaxed by using Lemma 8. However, we have no reason to believe that Lemma 8 itself is optimal.

\section{ Proof of Theorem 4}

Let $0 < \lambda <1$. We  prove that the set 
$$
\E = \{ \delta \in I : \rho_{\lambda,\delta} \mbox{ is irrational} \}.
$$
has null Hausdorff dimension,  using  properties of the tree (5) displayed in Section 3. It turns out that $\E$ has a natural structure of Cantor set induced by the tree (5). 

For any $k\ge 1$, both  trees (4) and (5) have $2^{k-1}+1$ elements on their  $k$-th row. Let us number
$$
\begin{aligned}
& 0= {p_{0,k}\over q_{0,k}} < {p_{1,k}\over q_{1,k}}< \dots < {p_{2^{k-1},k}\over q_{2^{k-1},k}}=1,
\\
& 0= {P_{0,k}\over Q_{0,k}} < {P_{1,k}\over Q_{1,k}}< \dots < {P_{2^{k-1},k}\over Q_{2^{k-1},k}}=1,
\end{aligned}
$$
their respective elements by increasing order, and put
$$
\begin{aligned}
& 
I_{j,k} = \left( {P_{j,k} + \lambda^{q_{j,k}-1} -\lambda^{q_{j,k}}\over Q_{j,k}}, {P_{j+1,k} \over Q_{j+1,k}}\right) \subset I , \quad 0 \le j \le 2^{k-1}-1 , k\ge 1,
\\ & 
\E_k = \bigcup_{j =0}^{2^{k-1}-1} I_{j,k} \subset I , \quad k\ge 1.
\end{aligned}
$$
Each interval $I_{j,k}$ in $\E_k$ contains  two intervals from $\E_{k+1}$ : 
$$
I_{j,k} \cap \E_{k+1} = I_{j',k+1} \cup I_{j'+1,k+1}, 
$$
where $j'$ and $j'+1$ are the respective indices  of ${p_{j,k}\over q_{j,k}}$ and  of the mediant ${p_{j,k} + p_{j+1,k}\over q_{j,k} + q_{j+1, k}}$ on the $(k+1)$-th row of the Stern-Brocot tree (4). 
Hence, for every $k\ge 1$, we have the inclusion
$$
\E_{k+1} \subset \E_k.
$$

It follows from Theorem 3 that
$$
\E =  \bigcap_{k\ge 1}\E_k= \bigcap_{k\ge 1}\bigcup _{j=0}^{2^{k-1}-1} I_{j,k} .
$$

\begin{lemma}
The upper bounds
$$
\max_{0\le j \le 2^{k-1}-1} | I_{j,k} | \le \lambda^{k}  \quad \mbox{and} \quad
\sum_{j=0}^{2^{k-1}-1} | I_{j,k} |^\sigma \le \sum_{n\ge k } n \lambda^{\sigma n},
$$
hold 
for any integer $k\ge 1$ and any real number $\sigma >0$,
where the bars $| \cdot |$ indicate here the length of an interval.
\end{lemma}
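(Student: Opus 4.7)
The plan is to first convert the length $|I_{j,k}|$ into an explicit function of $\lambda$ and of the integer denominators $q_{j,k}, q_{j+1,k}$, and then to control these denominators using only elementary facts about the Stern-Brocot tree.

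I would begin by applying the identity already derived in line $(8)$ of the paper to the two consecutive elements $P_{j,k}/Q_{j,k} < P_{j+1,k}/Q_{j+1,k}$ of the $k$-th row of the tree $(5)$. This yields the exact value
$$
|I_{j,k}| = \frac{\lambda^{q_{j,k}+q_{j+1,k}-1}}{Q_{j,k}\,Q_{j+1,k}} \le \lambda^{q_{j,k}+q_{j+1,k}-1},
$$
the upper bound being immediate from $Q = 1 + \lambda + \cdots + \lambda^{q-1} \ge 1$. Next, I would prove by induction on $k \ge 1$ that every pair of adjacent rationals on the $k$-th row of the Stern-Brocot tree $(4)$ satisfies $q_{j,k} + q_{j+1,k} \ge k+1$. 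The base case $k=1$ is trivial, and the inductive step follows at once from the mediant rule: inserting the mediant of a pair on row $k$ with denominator sum $s \ge k+1$ replaces it by two new adjacent pairs of denominator sums $s+q$ and $s+q'$, each of which is at least $s+1 \ge k+2$. Combined with the previous display, this gives the first claimed inequality $|I_{j,k}| \le \lambda^{k}$.

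For the second inequality, I would establish the counting bound that on row $k$ there are at most $m-1$ pairs whose denominator sum equals $m$. Indeed, each such pair $(p_{j,k}/q_{j,k},\, p_{j+1,k}/q_{j+1,k})$ produces a distinct mediant $(p_{j,k}+p_{j+1,k})/m$, which is a rational number in $(0,1)$ having denominator $m$, and there are at most $m-1$ such rationals. Grouping the sum $\sum_j |I_{j,k}|^\sigma$ according to the common value $m = q_{j,k}+q_{j+1,k} \ge k+1$ and then setting $n = m-1$ yields
$$
\sum_{j=0}^{2^{k-1}-1}|I_{j,k}|^\sigma \le \sum_{m \ge k+1}(m-1)\lambda^{\sigma(m-1)} = \sum_{n \ge k} n\,\lambda^{\sigma n},
$$
which is the desired estimate.

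I do not anticipate a serious obstacle. The only step that requires a little care is the counting bound, which rests on the standard fact that each reduced fraction appears exactly once in the Stern-Brocot tree, so that distinct adjacent pairs on row $k$ necessarily produce distinct mediants on row $k+1$. Everything else reduces to the explicit formula for $|I_{j,k}|$ coming from $(8)$ and a short induction on the level $k$.
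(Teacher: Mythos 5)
Your proposal is correct and takes essentially the same route as the paper's proof: both extract the exact value $|I_{j,k}| = \lambda^{q_{j,k}+q_{j+1,k}-1}/(Q_{j,k}Q_{j+1,k})$ from the computation in $(8)$, use the lower bound $q_{j,k}+q_{j+1,k}\ge k+1$ for adjacent denominators on row $k$ to obtain the first estimate, and then bound the $\sigma$-sum by grouping pairs according to their denominator sum and counting mediants of a given denominator on the next row. The only difference is that you spell out, by short inductions, the two Stern-Brocot facts ($q_{j,k}+q_{j+1,k}\ge k+1$ and the ``at most $m-1$ pairs with sum $m$'' count) which the paper simply cites as well known.
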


\begin{proof}
It follows from (6) and (8)  that
$$
| I_{j,k} | = {\lambda^{q_{j,k} + q_{j+1,k}-1} \over Q_{j,k}Q_{j+1,k}} \le \lambda^{q_{j,k} + q_{j+1,k}-1},
\quad k\ge 1 , 0\le j \le 2^{k-1}-1.
$$
As $ {p_{j,k}\over q_{j,k}}<  {p_{j+1,k} \over q_{j+1, k}}$ are consecutive elements on the $k$-th row of the  Stern-Brocot tree, the lower bound 
$
q_{j,k} + q_{j+1,k} \ge k+1
$
holds  for $0\le j \le 2^{k-1}-1$. The first upper bound of Lemma 9 immediately follows. For the second one, observe that 
$$
 \sum_{j=0}^{2^{k-1}-1} | I_{j,k} |^\sigma \le 
 \sum_{n\ge k+1} \mbox{Card} \left\{ j: q_{j,k} + q_{j+1,k} = n \right\} \lambda^{\sigma (n-1)}
 \le 
 \sum_{n\ge k+1 } (n-1) \lambda^{\sigma (n-1)} , 
 $$
 the last inequality coming  from the fact that $q_{j,k}+q_{j+1,k}$ is the denominator of  
 the mediant ${p_{j,k} +p_{j+1,k}\over q_{j,k}+q_{j+1,k}}$ located on the  $(k+1)$-th row and that for any $n\ge 2$ a given row of the Stern-Brocot tree contains at most $n-1$ elements having denominator $n$. 
 \end{proof}
 
Observe that the series $ \sum_{n\ge 1 } n \lambda^{\sigma n}$ converges for any $\sigma>0$.
 Then, Theorem 4 follows from Lemma 9 by using standard arguments on the $\sigma$-dimensional Hausdorff measure for $0 <\sigma \le 1$. See for instance Corollary 2 in \cite{BeTa}.

 \section{On the rotation number $\rho_{\lambda,\delta}$}
 
 We have collected in this section further information concerning the basic Theorem 3 and the dynamics of the transformation $f_{\lambda,\delta}$. 
 
 Let us begin with the definition of the rotation number.
  In order to compute the rotation number of $f$, it is convenient to introduce a function $F:  \R \to  \R$ called 
 {\it suspension or lift of $f$ on $\R$}. Our lift $F$ will be defined by 
$$
F=F_{\lambda,\delta}:x \in \R \mapsto \lambda\{ x\}+\delta+[x],
$$
where $[x]$ is the integer part of the real number $x$. Its name comes from the fact that $F$ satisfies the following properties:
\\
(i) Let $\pi:x\in\R \mapsto \{ x\} $ be the canonical projection of $\R$ on $I$, then, for every $x\in \R$, 
$$
\pi(F(x))=f(\pi(x)).
$$
(ii) $F(x+1)=F(x)+1$, for every $x\in \R$.\\
(iii) $F$ is an increasing  function on $\R$ which is continuous on each interval of $\R \setminus \Z$ and right continuous on $\Z$.

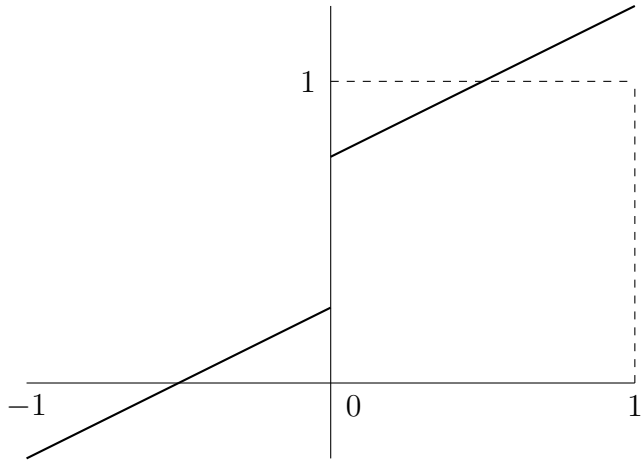
\begin{figure}[ht]
\begin{tikzpicture}
\draw (-4,0) -- (4,0);
\draw (0,-1) -- (0,5);
\draw[dashed] (4,0) -- (4,4);
\draw[dashed] (0,4) -- (4,4);
\draw[thick] (0,3) -- (4,5);
\draw[thick] (-4,-1) -- (0,1);
\node   at (0.3 ,-0.3) {$0$};
\node   at (-0.3,4) {$1$};
\node   at (-4,-0.3){$-1$};
\node   at (4,-0.3) {$1$};
\end{tikzpicture}
\caption{ Plot of $F_{1/2, 3/4}(x) $ in the interval $-1\le x < 1$ }
\end{figure}

Let $x_0 \in \R$. It is known (see \cite{RT}) that the limit 
$$
\rho_{\lambda,\delta} = \lim_{k \to \infty} \frac{F_{\lambda,\delta}^k(x_0)}{k} 
$$ 
exists and does not depend on the initial point $x_0$.  The number $\rho_{\lambda,\delta}$ is called   {\it rotation number} of the map $f=f_{\lambda,\delta}$ and $0\le \rho_{\lambda,\delta}<1$.

\subsection{The irrational rotation number case}

Let $0<\lambda<1$ be fixed. As indicated in Theorem 3, it is known that the graph of the application  $ \delta  \mapsto \rho_{\lambda,\delta} $ is a { \it Devil's staircase}, meaning   that this map is a continuous non-decreasing function  sending  $[0,1)$ onto 
$ [0,1)$ which takes the rational value $p/q$ on the whole interval 
$$
\frac{1-\lambda}{1-\lambda^q} c\left(\lambda,\frac{p}{q}\right) \le \delta \le \frac{1-\lambda}{1-\lambda^q} \left( c\left(\lambda,\frac{p}{q}\right) + \lambda^{q-1}-\lambda^q \right).
$$
 Notice that if we select a sequence of reduced rationals  $p/q$ tending to an irrational number $\rho$, so that $q$ tends to infinity,
then the above intervals shrink to the point
$$
\delta(\lambda,\rho) =(1-\lambda) \left( 1 +  \sum_{k=1}^{+\infty }\left(  \left[ (k +1)\rho \right] -  \left[ k \rho \right] \right)\lambda^{k} \right),
$$
 since the integer part function $x \mapsto [x]$ is continuous at non-integer  $x$. Therefore, for each  irrational number $\rho$ with $0< \rho <1$, there is exactly one value of $\delta$ for which $\rho = \rho_{\lambda,\delta} $ and this value is given by the series $\delta(\lambda,\rho)$.

Fix now an irrational number $\rho$ with $0 < \rho <1$.
Following the approach given in \cite{Co}, define a function $\varphi : \R \to \R$ by the formula
$$
\varphi(t) =  \sum_{k\ge 0} \lambda^k\Big(\delta(\lambda,\rho) + (1-\lambda)[t-(k+1) \rho] \Big).
$$
It has been established in \cite{Co} that the function  $\varphi : \R \to \R$ is an increasing function which has the following properties:

$(C1)$ $\varphi(t+\rho)  = F_{\lambda,\delta(\lambda,\rho)}(\varphi(t)) ,  \quad \forall t  \in \R$, 

$(C2)$ $\varphi  (t+1)= \varphi  (t)+1$, and 

$(C3)$ $\varphi  (0)=0$.

Reducing modulo $1$, this establishes that $ f_{\lambda,\delta(\lambda,\rho)}$ and the circle rotation map $R_\rho : x\in I \mapsto x+ \rho \mod 1$ are topologically conjugated on the limit set  $C = \varphi(I) \subset I$. 

Notice that $(C2)$ and $(C3)$ imply that $[\varphi(t) ] = [t]$ for every $t\in \R$,  from which  $(C1)$
easily follows.
The property $(C3)$ is equivalent to the formula  (3) for $\delta(\lambda,\rho)$ since
$$
\begin{aligned}
\varphi (0)& =  \sum_{k\ge 0} \lambda^k\Big(\delta(\lambda,\rho) + (1-\lambda)[-(k+1) \rho] \Big)
= {\delta(\lambda,\rho) \over 1-\lambda} +(1-\lambda) \sum_{k\ge 0} \lambda^k[-(k+1) \rho]
\\
&= 
{\delta(\lambda,\rho) \over 1-\lambda} -(1-\lambda) \sum_{k\ge 0} \lambda^k(1 + [(k+1) \rho])
={\delta(\lambda,\rho) \over 1-\lambda} -1- {1-\lambda\over \lambda}  \sum_{k\ge 1} \lambda^k [k \rho].
\end{aligned}
$$

We now link the preceding facts  to the dynamics of $f_{\lambda,\delta(\lambda,\rho)}$, or equivalently to the iterates of the real function $F_{\lambda,\delta(\lambda,\rho)}$ by reducing modulo $1$. 
For any  $x_0 \in \R$, we define the orbit of  $x_0$ under $F_{\lambda,\delta(\lambda,\rho)}$ iteratively  by
$$
x_{k+1}=F_{\lambda,\delta(\lambda,\rho)}(x_k)=\lambda x_k+\delta(\lambda,\rho)+(1-\lambda)[x_k], \  \mbox{ for every} \ k\ge 0. 
$$  
 We then easily obtain by induction on $n$ the formula
$$
x_n= \lambda^n x_0 + \sum_{k=0}^{n-1} \lambda^k\Big(\delta(\lambda,\rho) + (1-\lambda)[x_{n-k-1}]\Big),\  \mbox{ for every} \ n\ge 0.
$$

It is proved in \cite{Co} that for each initial point $x_0$, there exists $t_0\in \R$ such that
$$
[x_k ]=[t_0+ \rho k] \ \mbox{ for every } \ k\geq 0.
$$
Therefore
$$
x_n= \lambda^n x_0 + \sum_{k=0}^{n-1} \lambda^k\Big(\delta(\lambda,\rho) + (1-\lambda)[t_0+ n\rho -(k+1)\rho]\Big)
= \varphi(t_0 + n\rho) + {\mathcal  O} (\lambda^n),
$$
as $n$ tends to infinity. Hence the $f_{\lambda,\delta(\lambda,\rho)}$-orbit of $x_0$ modulo  1 converges 
exponentially fast to the orbit of $\varphi(t_0)$ modulo 1 on $C$.

\subsection{The rational rotation number case}
According to \cite{GT}, any  contracted rotation $f_{\lambda,\delta}$  with rational rotation number $\rho_{\lambda,\delta}$ has a unique periodic orbit in an extended meaning, allowing multivalues at the discontinuity point $0\in \R/ \Z$. Then, for every $x\in I$, the $\omega$-limit  set
$$
 \omega(x) : = \cap_{\ell \ge 0} \overline{\cup_{k \ge \ell} f_{\lambda,\delta}^k(x)}
$$
 coincides with this periodic orbit.

 Let $f : I \to I$ be a  map of the unit interval which is continuous outside finitely many points.  We say that $f$ is a {\it piecewise contracting map}, when  there exists $0<\lambda<1$ such that for every open  interval  $J \subset I$ on which $f$ is continuous, the inequality $| f(x)-f(y)|  \le \lambda | x-y|  $ holds  for any $x,y \in J $ (see \cite{NP}).
 We view as well $f$ as a circle map $f : \R/\Z \to \R/\Z$ by identifying the sets $\R/\Z$ and  $ I$,  thanks to the canonical  bijection   $ I \hookrightarrow \R \to \R/\Z$. 
We now  prove  that $f=f_{\lambda,\delta}$  has at most one periodic orbit, as a corollary of the  more general statement

\begin{theorem} 
Let $f$ be a piecewise contracting and orientation-preserving circle map.
Assume that $f$ has  a unique  point of  discontinuity on the circle $\R/\Z$ and  that $f$ is right continuous at this point.
Then $f$ has at most one periodic orbit. 
\end{theorem}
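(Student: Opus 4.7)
My plan is to argue by contradiction: if $f$ had two distinct periodic orbits, $f^q$ would have more fixed points than can fit into its continuity arcs. First, lift $f$ to a non-decreasing $F:\R\to\R$ with $F(x+1)=F(x)+1$, right-continuous, so that the rotation number $\rho:=\lim_{n\to\infty}F^n(x)/n$ is well-defined and independent of $x$. Since $f$ has a periodic orbit by assumption, $\rho=p/q$ is rational; write $p/q$ in lowest terms. I claim every periodic point $y$ of $f$ satisfies $F^q(y)=y+p$. Indeed, the definition of $\rho$ forces the minimal period $N$ of $y$ to be of the form $mq$, with $F^N(y)=y+mp$ for some $m\ge 1$. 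Set $\tilde\psi:=F^q-p$; then $\tilde\psi$ is non-decreasing with $\tilde\psi(x+1)=\tilde\psi(x)+1$ and $\tilde\psi^m(y)=y$. If $\tilde\psi(y)>y$, monotonicity yields $\tilde\psi^n(y)\ge\tilde\psi(y)>y$ for every $n\ge 1$, contradicting $\tilde\psi^m(y)=y$, and symmetrically if $\tilde\psi(y)<y$. Hence $\tilde\psi(y)=y$, so $y$ has period exactly $q$, and two distinct orbits would furnish $2q$ distinct fixed points of $f^q$ on $\R/\Z$.

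The second half of the argument bounds the number of fixed points of $f^q$ by $q$. On the unique continuity arc $(\R/\Z)\setminus\{c\}$, $f$ is a strict $\lambda$-contraction and orientation-preserving, hence strictly increasing and injective there; right-continuity at $c$ gives $f(c)=\lim_{z\to c^{+}}f(z)$, the strict infimum of $f$ over the arc, which is not attained inside it. So $f$, and by induction every iterate $f^k$, is injective on $\R/\Z$, whence $|f^{-k}(\{c\})|\le 1$ for every $k$. The discontinuity set of $f^q$, namely $D_q:=\bigcup_{k=0}^{q-1}f^{-k}(\{c\})$, therefore has at most $q$ elements, and they partition the circle into at most $q$ half-open arcs $[c_i,c_{i+1})$. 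Any two points of such an arc lie in a common continuity branch of $f$ at every step of the iteration (otherwise an interior point of the arc would belong to $D_q$), so $f^q$ restricted to $[c_i,c_{i+1})$ is right-continuous at $c_i$ and a strict $\lambda^q$-contraction. Two distinct fixed points $x\ne y$ in the same arc would then give $|x-y|=|f^q(x)-f^q(y)|\le\lambda^q|x-y|$, impossible. Thus $f^q$ admits at most one fixed point per arc, totalling at most $q$ fixed points on $\R/\Z$, contradicting the $2q$ produced by the two hypothetical orbits.

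The step I expect to be most delicate is the opening claim that each periodic point realises the minimal rotation relation $F^q(y)=y+p$, rather than a higher multiple, because it weaves the definition of $\rho$ together with the monotonicity of the lift in the presence of jumps. Once this is in hand, the injectivity of $f$, the preimage count, and the one-fixed-point-per-arc contraction argument are all routine.
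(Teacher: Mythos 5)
Your proof is correct but follows a genuinely different route from the paper's. The paper's argument is more direct and more economical in what it uses: it picks the smallest representative $x_i$ of each periodic orbit $O_i$ and shows by induction on $k$, using only that the lift $F$ satisfies $0\le F(y)-F(x)\le\lambda(y-x)$ on each integer interval, that $[F^k(x_1)]=[F^k(x_2)]$ while $0\le F^k(x_2)-F^k(x_1)\le\lambda^k(x_2-x_1)$; the key step is that a mismatch of integer parts would force $f^{k+1}(x_2)<\lambda^{k+1}(x_2-x_1)\le x_2$, contradicting minimality of $x_2$ in its orbit. Since the orbits are finite and the difference tends to $0$, the orbits must coincide. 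Notably the paper never invokes the rotation number and never needs $f$ to be injective. Your approach instead deploys the rotation number $p/q$, the fact that each periodic point has exact period $q$ (a nice extra that the paper's proof does not give), and a fixed-point count: $f^q$ has at most one fixed point per continuity arc, and the arcs number at most $q$ because $f$ is injective so that each $f^{-k}(\{c\})$ is a singleton. Both arguments are sound under the reading that ``orientation-preserving'' means the continuous branch is strictly increasing, as is the case for the contracted rotations $f_{\lambda,\delta}$ with $\lambda>0$.

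One point to keep in mind: your argument leans on the injectivity of $f$ in an essential way, whereas the paper's inequality (16) only asks that $F(y)-F(x)\ge 0$ on each integer interval, i.e.\ that the lift be non-decreasing. Under that weaker reading a piecewise contraction can be locally constant, in which case $f^{-k}(\{c\})$ can be an interval rather than a point and the bound $|D_q|\le q$ collapses, breaking the counting step. The paper's proof is robust to this possibility. So either state explicitly that you assume strict monotonicity of the continuous branch (which the right-continuity at $c$ then upgrades to injectivity on the whole circle, as you argue), or note that this is the convention you adopt for ``orientation-preserving.'' With that caveat made explicit, the proof is complete and correct.

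A small presentational remark: when you say two points of an arc ``lie in a common continuity branch of $f$ at every step of the iteration,'' the circle has only one continuity arc for $f$; the intended (and correct) content is that for $j<q$ the image $f^j\bigl((c_i,c_{i+1})\bigr)$ is a connected set not containing $c$, because it is the continuous image of an interval disjoint from $D_q$, which is what lets the $\lambda$-contraction be applied at each step and also gives right-continuity of $f^q$ at $c_i$ by a limit from the right.
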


\begin{proof} 
We may assume without loss of generality that the discontinuity is the origin $0\in \R /\Z$. This is the case for any  contracted rotation $f_{\lambda,\delta}$ (see Figure 2).
 Then, there exists a lift $F$ of $f$ satisfying the three  properties (i), (ii), (iii) appearing on page 11,   which are verified  by the function $F_{\lambda,\rho}$. Moreover, there exists $0<\lambda<1$ such that for each integer $n$, we have 
inequalities of the form
$$
0 \le F(y) -F(x) \le \lambda (y-x)
\quad\hbox{\rm for every } \quad
n \le x \le y < n+1.
\eqno{(16)}
$$
Let $O_1$ and $O_2$ be two periodic orbits of $f$. 
For $i=1, 2$, define $x_i$ as the smallest element of $O_i$ in $I$. Exchanging possibly $O_1$ and $O_2$, we may assume that  $0 \le x_1\le x_2<1$. 
We then claim that 
$$
[F^k(x_1)]= [F^k(x_2)] 
\quad\hbox{\rm and} \quad
0 \le F^k(x_2)-F^k(x_1)\le \lambda^k(x_2-x_1)
\eqno{(17)}
$$
 for every integer $k \ge 0 $. The proof is performed by induction  on $k$ and the assertion clearly holds for $k=0$. We have
 $$
 0 \le F^{k+1}(x_2)-F^{k+1}(x_1) = F(F^k(x_2))-F(F^k(x_1))\le \lambda(F^k(x_2)-F^k(x_1))\le\lambda^{k+1}(x_2-x_1),
 $$
 by (16) and (17).   Assume on the  contrary  that 
$$
[ F^{k+1}(x_2) ] \ge [ F^{k+1}(x_1)] +1,
$$
however  $[ F^{k}(x_2) ] = [ F^{k}(x_1)]$. We thus have 
$$
F^{k+1}(x_1) < [F^{k+1}(x_1)]+ 1 \le [F^{k+1}(x_2)]  \quad  \mbox{and}  \quad F^{k+1}(x_2)- F^{k+1}(x_1) \le  \lambda^{k+1}(x_2-x_1),
$$
so that 
$$
 f^{k+1}(x_2)= \{F^{k+1}(x_2)\}=F^{k+1}(x_2)-[F^{k+1}(x_2)] <  F^{k+1}(x_2) - F^{k+1}(x_1) \le\lambda^{k+1}(x_2-x_1) \le x_2
$$
which contradicts the choice of $x_2$. The claim is proved.

Reducing modulo 1 and letting $k$ tend to infinity, we deduce from (17) by periodicity that $ f^k(x_2) = f^k(x_1)$ for any  $k\ge 0$. We have proved that 
$O_1=O_2$.

\end{proof}

\section*{Appendix : Liouville numbers}

A real number $\xi$ is  a {\it Liouville number}, if for every integer number $k\ge 1$, there exist integers numbers $p_k$ and $q_k$ with $q_k \ge 2$ such that
$$
0< \left\vert \xi - {p_k \over q_k} \right\vert < {1 \over q_k^k}.
$$
Liouville's  inequality shows that $\xi$ is a transcendental number.
 The sequence $(q_k)_{k\ge 1}$ is thus unbounded.
For completeness, we include the proof of the following lemma:

\begin{lemma} 
Let $\xi$ be  Liouville and $r$ and $s$ be nonzero rational numbers, then $r\xi+s$ is  a Liouville number.
\end{lemma}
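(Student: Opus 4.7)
My plan is to transfer the rational approximations of $\xi$ directly to rational approximations of $r\xi+s$, and then show that after sacrificing a controllable constant factor, the quality of approximation is still fast enough to witness the Liouville property for $r\xi+s$.

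First I would write the nonzero rationals in the form $r = a/b$ and $s = c/d$ with $a,b,c,d \in \Z$, $b,d \ge 1$ and $a \ne 0$. For every integer $k \ge 1$, the Liouville hypothesis on $\xi$ supplies integers $p_k,q_k$ with $q_k \ge 2$ and
$$
0 < \left|\xi - \tfrac{p_k}{q_k}\right| < \tfrac{1}{q_k^k}.
$$
Setting $P_k := a d\, p_k + b c\, q_k$ and $Q_k := b d\, q_k \ge 2$, one obtains the tautological identity $\tfrac{P_k}{Q_k} = r\cdot\tfrac{p_k}{q_k} + s$, and hence
$$
0 < \left| r\xi + s - \tfrac{P_k}{Q_k}\right| = |r| \left| \xi - \tfrac{p_k}{q_k}\right| < \tfrac{|r|}{q_k^k} = \tfrac{|r|(bd)^k}{Q_k^k}.
$$
The strict lower bound $> 0$ uses $r \ne 0$ together with $\xi \ne p_k/q_k$.

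Next, to verify the Liouville definition for $r\xi + s$, I would fix an arbitrary target exponent $n \ge 1$ and look for some index $k$ such that $|r\xi + s - P_k/Q_k| < 1/Q_k^n$. Substituting the bound above, this reduces to the numerical inequality
$$
q_k^{k-n} > |r|(bd)^n.
$$
Since $q_k \ge 2$, the left-hand side grows at least like $2^{k-n}$ as $k \to \infty$, so for $k$ large enough (depending on $n$, $r$, $b$, $d$) the inequality holds; the resulting pair $(P_k, Q_k)$ then serves as the required approximation witnessing that $r\xi+s$ is Liouville. Relabeling these indices produces an approximating sequence indexed by $n$ as required by the definition.

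The argument involves no real obstacle; the only thing to be careful about is that the approximants $P_k/Q_k$ need not be in lowest terms, but this is harmless because the definition of Liouville number permits any denominator $\ge 2$, and replacing $Q_k$ by a smaller denominator can only improve the approximation inequality in the relevant direction (a larger power of a smaller base). The mild bookkeeping of constants $|r|$ and $(bd)^n$ is absorbed by choosing $k$ a bit larger than $n$.
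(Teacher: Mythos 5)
Your proof is correct and follows essentially the same route as the paper's: both transfer the approximants $p_k/q_k$ of $\xi$ to the rationals $P_k/Q_k = (ad\,p_k + bc\,q_k)/(bd\,q_k)$ approximating $r\xi+s$, and absorb the residual factor $|r|(bd)^n$ by taking $k$ large. The only cosmetic difference is how $k$ is chosen: the paper picks $k$ with $q_k \ge |a|b^{m-1}d^m$ (using that $(q_k)$ is unbounded), whereas you invoke the cruder bound $q_k \ge 2$ so that $q_k^{k-n} \ge 2^{k-n}$ eventually exceeds $|r|(bd)^n$; both choices make the same estimate close.
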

{\it Proof.} 
We write $\displaystyle r= { a \over b}$ and $\displaystyle s= { c \over d}$, where $a,b,c,d \in \Z$ with $b\ge 1$ and $d\ge 1$.
We claim that $\displaystyle { a \over b} \xi+ { c \over d}$ is  a Liouville number.
Let $m\ge 2$ be arbitrarily fixed. 
There exists $k=k(m)$ such that $k>m$ and $q_k\ge \vert a \vert b^{m-1}d^m$. We have
$$
 \left\vert {a \over b}  \xi + { c \over d} - { adp_k+bcq_k \over bdq_k} \right\vert 
= { \vert a \vert \over b} \left\vert \xi -{p_k \over q_k} \right\vert  
 <  { \vert a \vert \over b} {1 \over q_k^k}\le { \vert a \vert \over b} {1 \over \vert a \vert b^{m-1}d^m q_k^{k-1}} \le {1 \over (bdq_k)^{m}}
 $$
 which proves that $r\xi+s$ is a Liouville number.
 \cqd

{\bf Acknowledgements.} 
Our work originates from  a question of Mark Pollicott about the rationality of the rotation number for rational values
of the parameters. We are indebted to him for valuable discussions on the subject.
We would like to thank the anonymous referee for a careful reading of the  manuscript whose remarks enhanced greatly our text. We graciously acknowledge the support of R\'egion Provence-Alpes-C\^ote d'Azur through the project APEX {\it Syst\`emes dynamiques: Probabilit\'es et Approximation Diophantienne} PAD and CEFIPRA through project Proposal No. 5801-1.

\vskip 5mm

\vskip 5mm

\centerline{\scshape {\rm Michel} Laurent {\rm and } {\rm Arnaldo} Nogueira(*) \footnote{(*) Partially
 supported by the programs MATHAMSUD projet No. 36465XD PHYSECO and MATHAMSUD projet No. 38889TM DCS: Dynamics of Cantor systems.}}

{\footnotesize
 \centerline{Aix Marseille Univ, CNRS, Centrale Marseille, I2M, Marseille, France}
 \centerline{michel-julien.laurent@univ-amu.fr and  arnaldo.nogueira@univ-amu.fr}}


\begin{thebibliography}{100}

\bibitem{AdBu}
B. Adamcewski and Y. Bugeaud.
{\it  Nombres r\'eels de complexit\'e sous-lin\'eaire : mesures d'irrationalit\'e et de transcendance}, J. reine angew. Math. {\bf 568} (2011), 65-98.

\bibitem{BeTa}
A. S. Besicovitch and S. J. Taylor.
{\it On the complementary intervals of a linear closed set of zero Lebesgue measure},
Journal of the London Math. Soc. {\bf 29} (1954), 449-459.

\bibitem{Boh}
P.E. B$\ddot{\o}$hmer.
{\it  $\ddot{U}ber$ die Transzendenz gewisser dyadischer Br$\ddot{u}$che}, Math. Ann. {\bf 96} (1927), 367-377.


\bibitem{Br}
J. Br\'emont.
{\it Dynamics of injective quasi-contractions}, Ergodic. Th. and Dyn. Syst. {\bf 26} (2006), 19-44.



\bibitem{Bu}
Y. Bugeaud.
{\it Dynamique de certaines applications contractantes, lin\'eaires par morceaux, sur [0,1[},
C. R. Acad. Sci. de Paris {\bf 317} S\'erie I  (1993), 575-578.

\bibitem{BuC}
Y. Bugeaud and J.-P. Conze.
{\it Calcul de la dynamique d'une classe de transformations  lin\'eaires contractantes mod 1 et arbre de Farey},
Acta Arithmetica {\bf LXXXVIII.3}  (1999), 201-218.

\bibitem{Co}
R. Coutinho.
{\it Din\^amica simb\'olica linear}, Ph. D. Thesis, Instituto Superior T\'ecnico, Universidade T\'ecnica de Lisboa, February 1999. 

\bibitem{DH}
E. J. Ding and P. C. Hemmer. {\it Exact treatment of mode locking for a piecewise linear map}, Journal of Statistical Physics, {\bf 46} (1987), 
99-110.


\bibitem{FC}
O. Feely and L. O. Chua.  {\it The effect of Integrator Leak in $\Sigma-\Delta$ Modulation}, 
IEEE Transactions on Circuits and Systems,  {\bf 38} (1991), 1293-1305.

\bibitem{GT}
J.-M. Gambaudo and C. Tresser. {\it On the dynamics of quasi-contractions}, Bol. Coc. Bras. Mat., {\bf 19} (1988), 61-114.


\bibitem{LM}
A. Lasota and M. C. Mackey. {\it Noise and statistical periodicity}. Physica {\bf 28D} (1987), 143-154.


\bibitem{LoVdPA}
J.H. Loxton  and A.J. van der Poorten.
{\it Arithmetic properties of certain functions in several variables III},
Bull. Austral. Math. Soc., {\bf 16}  (1977), 15-47.

\bibitem{LoVdPB}
J.H. Loxton  and A.J. van der Poorten.
{\it Transcendence and algebraic independence by a method of Mahler}, in 
{\it Transcendence Theory: Advances and applications}, ed. by A. Baker and D.W.  Masser, Academic Press (1977), 211-226.



\bibitem{NS}
J. Nagumo and S. Sato.  {\it  On a response characteristic of a mathematical neuron model}, Kybernetik {\bf 10(3)} (1972), 155-164.



\bibitem{Ni}
K. Nishioka. {\it  Mahler Functions and Transcendence}, Springer Lecture Notes in Mathematics,  Vol. 1631.

\bibitem{NP}
A. Nogueira and B. Pires. {\it Dynamics of piecewise contractions of the interval}, Ergodic Theory and Dynamical Systems,
Volume 35, Issue 7 (2015), 2198-2215.

\bibitem{RT}
F. Rhodes and C. Thompson. {\it Rotation numbers for monotone functions on the circle}, J. London Math. Soc. (2) {\bf 34} (1986), 360-368. 

\end{thebibliography}
\end{document}